
\documentclass[final]{siamltex}
\usepackage{cite}
\usepackage{amsxtra,amssymb,amsfonts,amsmath}
\usepackage{enumerate}


\def\ds{\displaystyle}


\def\R{\mathbb{R}}

\title{Optimal control of a thermistor problem with vanishing conductivity} 


\author{Volodymyr Hrynkiv\thanks{Department of Mathematics and Statistics, University of Houston - Downtown,
        Houston, TX 77002 USA ({\tt HrynkivV@uhd.edu}).}
        \and Sergiy Koshkin\thanks{Department of Mathematics and Statistics, University of Houston - Downtown,
        Houston, TX 77002 USA ({\tt koshkins@uhd.edu}).}
        }

\begin{document}

\maketitle

\begin{abstract}
An optimal control of a steady state thermistor problem is considered, where the convective boundary coefficient is taken as the control variable. A distinctive feature of this paper is that
the problem is considered in arbitrary dimensions, and the electrical conductivity is allowed to vanish above a threshold temperature value. The existence of a steady state is
proved, an objective functional is introduced, the existence of the optimal control is proved, and the optimality system is derived. 
\end{abstract}

\begin{keywords}
optimal control, thermistor problem, elliptic systems
\end{keywords}

\begin{AMS}
49J20, 49K20
\end{AMS}

\pagestyle{myheadings} \thispagestyle{plain} \markboth{V. HRYNKIV,
S. KOSHKIN}{OPTIMAL CONTROL OF A THERMISTOR PROBLEM}

\section{Introduction}\label{Intro}
Thermistor is a device whose electrical conductivity is highly
sensitive to temperature, namely, its electrical conductivity may
change by several orders of magnitude with the increase of
temperature. Thermistors are often used as temperature control
elements in a wide variety of military and industrial equipment
ranging from space vehicles to air conditioning controllers. They
are also used in the medical field for localized and general body
temperature measurement, in meteorology for weather forecasting,
and in chemical industries as process temperature sensors
\cite{Kwo,Mac}. A detailed description of thermistors and their
applications in electronics and other industries can be found in
\cite{Mac}.

We consider the following steady-state thermistor problem
\begin{eqnarray}\label{Prob1.1}
{\ds \nabla\cdot(\sigma(u)\nabla\varphi) }&=& {0  \quad {\rm in}\
\Omega, }\nonumber\\
{\ds \Delta u+{\sigma}(u)|\nabla\varphi|^{2} }&=&{0  \quad {\rm
in}\ \Omega,} \nonumber\\[-1.5ex]
\label{statet} \\[-1.8ex]
{\ds \frac{\partial u}{\partial n}+\beta (u-u_1)}&=&{0  \quad {\rm
on}\ \Gamma_R,} \nonumber\\
 {\ds u}&=&{u_0  \quad {\rm on}\
 \Gamma_D,}\nonumber\\
{\ds \varphi}&=&{\varphi_0  \quad {\rm on}\
 \partial\Omega,}\nonumber
\end{eqnarray}
where $\varphi( x)$ is the electric potential, $u(x)$ is the
temperature, $\sigma(u)$ is the electrical conductivity, and $n$
denotes the outward unit normal. 
A more detailed discussion of the physical justification of
equations (\ref{statet}) can be found in \cite{Fow,How,Shi}.

The heat transfer coefficient $\beta$ is taken to be a control
variable and the set of admissible controls is denoted by $U_{\cal
M}:=\{\beta\in L^\infty(\Gamma_R):\,0\leq\beta\leq {\cal M}\}$.
Assumptions on the data are as follows:
\begin{remunerate}
\item $\Omega\subset{\R}^{d}$, $d>2$, is a bounded domain with
piecewise smooth (at least $C^2$) boundary $\partial\Omega$, where
$\partial\Omega={\overline{\Gamma}_D}\cup{\overline{\Gamma}_R}$
with $\Gamma_D\neq \emptyset$.

\item $\sigma(s)\in C^1([0,\infty))$, $\sigma(s)$ is a positive function,
monotone decreasing on $[0, u_{*})$, where $u_*$ is a threshold
temperature, that is, $\sigma(u)=0$ for $u\geq u_*$ and
$\sigma(u)>0$ for $u<u_*$. In addition,
$\|\sigma\|_{C^{1}([0,u_{*}])}\leq \mu<\infty$ and $\int_0^{u_{*}}
\frac{ds}{\sigma(s)}=\infty$. The value $u_{*}$ is called the {\it
critical temperature}, where we allow $u_{*}=\infty$, but in
realistic thermistors $u_{*}<\infty$.

\item Extending $\varphi_0$ to the whole domain $\Omega$ we assume that
$\varphi_0\in C^1(\overline{\Omega})$, moreover $\|\varphi_0\|_{W^{1,\infty}(\Omega)}$ is sufficiently small.

\item $u_0,u_1\geq 0$ a.e., $\beta\in U_{\cal M}$, and also extending to the whole domain $u_0, u_1\in
C^1(\overline{\Omega})$. Moreover, $\|u_0\|_{L^{\infty}(\Omega)},\,\|u_1\|_{L^{\infty}(\Omega)}<u_*$,
i.e. the boundary data is bounded away from the critical temperature.
\end{remunerate}

The first paper on an optimal control problem for thermistor was a
time dependent one considered by Lee and Shilkin in \cite{Lee},
where the source term was taken to be the control. The most recent
paper on optimal control of thermistor equations is by
Meinlschmidt, Meyer, and Rehberg \cite{Mei}, where the time
dependent problem from \cite{Hom} is extended to three dimensions
and where the control is taken to be the current on a part of the
boundary. Ammi and Torres in \cite{Amm} considered an optimal
control of nonlocal thermistor equations.

Our paper is a generalization of \cite{Hry}, and to our knowledge this is the
first paper on optimal control of thermistor with vanishing
conductivity. We generalize \cite{Hry} in several ways. First of
all, as physics demands the electrical conductivity $\sigma(u)$ is
no longer assumed to be uniformly positive. In real thermistors
the conductivity drops sharply by several orders of magnitude at
some critical temperature, and remains essentially zero for larger
temperatures, this feature is essential for the intended
functioning of thermistors as thermoelectric switches. The
restriction to two-dimensional domains is also removed, as well as
the artificial strict positivity assumption on the boundary heat
transfer coefficient $\beta$ that serves as the control.

The price for removing these restrictions is a somewhat higher
regularity imposed on the equilibrium temperature in the Robin
condition, and introduction of the Dirichlet boundary condition
for the temperature on part of the boundary. Let us explain the
reasons. 

The proof of existence of optimal control in \cite{Hry}
was based on the existence result of \cite{How}, which assumed
$\sigma(u)$ to be uniformly positive. Without this assumption the
first equation in \eqref{Prob1.1} is not uniformly elliptic, so
there may be no a priori bound on $\nabla\varphi$, and
$\sigma(u)\nabla\varphi$ may not make sense. One way to deal with
this issue is to switch to a notion of the capacity solution developed by Xu \cite{Xu0, Xu1, Xu2}.
Unfortunately, his theory is restricted to the Dirichlet/Neumann
boundary conditions, and does not provide enough regularity to
construct the optimal control. 

We adopt instead the approach developed by Chen \cite{Che} for the thermistor problem with the
Robin condition on part of the boundary (actually, Chen's
condition is even more general). It uses a modified Diesselh\"orst
substitution to transform \eqref{Prob1.1} into a similar problem
where the analog of $\sigma(u)$ no longer vanishes. The boundary
condition becomes non-linear, and the resulting equation for the
analog of $u$ is ``not very good", as Chen put it (one has to work
with a differential inequality), but after some technical labor
one is able to derive $L^\infty$ bounds on $u$ that bound it away
from the critical temperature. The Chen's original work states
the assumptions about the data only in terms of the transformed
problem, and focuses on the assumptions that do not obtain for
$\sigma(u)$ of interest to us. So our contribution in this part is
to spell out the assumptions on the original data (using Lemma
\ref{lemma1}), and to somewhat rework the proof.

To reap the fruits of our labor, however, we need a higher regularity
for $u$ than $L^\infty$, so that we can get $\varphi$ which is
better than $H^1$ for constructing the control. The requisite
improvement to $W^{1,p}$, with any $p>2$, based on the first
equation in \eqref{Prob1.1}, is provided by Theorem 7.2(iii) of
\cite{Rod}, but only assuming that $u$ (and hence $\sigma(u)$) is
in $C^0$. Chen's paper does provide the bootstrap to
H\"older continuity for $u$ (and $\varphi$) once the $L^\infty$
bound is in place, but only if $C^1$ regularity is assumed of the boundary data.
The restriction to two dimensions is removed because we no longer depend on the
Meyers estimate relied upon by \cite{How} to handle the boundary
data with weaker regularity. We believe that to accomodate
discontinuous boundary data one would have to extend Xu's capacity
solutions to the Robin problem, and develop non-smooth methods for
constructing controls in this context.

In \cite{Hry} the Robin condition on $u$ was imposed along the
entire boundary, and the heat transfer coefficient $\beta$, which
serves as the control, was bounded away from zero. Unfortunately,
this does not provide enough coercivity to guarantee that $u$
stays away from the critical temperature. To keep it from going
critical one needs to know explicitly that $u<u_*$ at least
somewhere in the domain, this is the reason for imposing the
Dirichlet condition on part of the boundary with
$\|u_0\|_{L^{\infty}(\Omega)}<u_*$. Once this is done, however, the
uniform positivity of $\beta$ is no longer necessary on the Robin
part of the boundary.

In this paper we choose the same objective functional as in
\cite{Hry}. The physical considerations leading to this objective
functional can be found in \cite{Hry} as well. Thus we have  
$$
J(\beta)=\int_\Omega
u\,dx+\int_{\Gamma_R}\beta^2\,ds,\nonumber
$$
and the optimal control problem is:
\begin{equation}
{\rm Find}\ \beta^*\in U_{\cal M}\ {\rm such}\ {\rm that}\
J(\beta^*)=\min_{\beta\in U_{\cal M}}J(\beta).\label{OC}
\end{equation}
Henceforth we use the standard notation for Sobolev spaces, we
denote $\|\cdot\|_p=\|\cdot\|_{L^p(\Omega)}$ for each $p\in
[1,\infty]$; other norms will be explicitly labeled.


\section{Existence of a weak solution and its regularity}\label{criticaltemp}

First, let us define weak solution to \eqref{statet}. Denote
$V_D:=\{v\in H^1(\Omega): v=0 \,\,{\rm on }\,\,\Gamma_D\}$. A pair
$(u,\varphi)$ is called a weak solution if $u-u_0\in V_D,
\varphi-\varphi_{0}\in H_{0}^{1}(\Omega)$, and
\begin{eqnarray}\label{weaksol}
\int_\Omega\nabla u\nabla v\,dx+\int_{\Gamma_R}\beta
(u-u_1) v\,ds&=&\int_\Omega
(\varphi_{0}-\varphi)\sigma(u)\nabla\varphi\nabla
v\,dx+\int_\Omega(\sigma(u)\nabla
\varphi\nabla\varphi_{0})\,v\,dx \nonumber\\
\label{eq1}\\[-2.2ex]
\int_\Omega\sigma(u)\nabla \varphi\cdot\nabla w\,dx&=& 0
\hskip0.2in \forall\,w\in H^{1}_{0}(\Omega),\forall\,v\in
V_D(\Omega).\nonumber
\end{eqnarray}
The key existence result that we rely on is the following, the proof will be given in the next two sections.
\begin{theorem}\label{existreg}
Suppose conditions 1.-4. of the Introduction hold. Then system
\eqref{Prob1.1} has a weak solution $u,\varphi$. Moreover,
$u,\varphi\in C^\alpha(\overline{\Omega})$ for some $0<\alpha<1$
and $0\leq u\leq N<u_*$, where $N$ depends only on $\Gamma_D$,
$u_0,\varphi_0$ and $\|\sigma\|_{C^1}$, but not on $u_1$ or
$\beta$.
\end{theorem}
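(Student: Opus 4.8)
The plan is to construct the weak solution by passing through an auxiliary, non-degenerate problem obtained from \eqref{Prob1.1} via Chen's modified Diesselh\"orst substitution, and then to bootstrap regularity in the range where $u$ is bounded away from $u_*$. As a first step I would carry out the substitution of \cite{Che}, which replaces $(u,\varphi)$ by a pair $(\hat u,\hat\varphi)$ solving a system of the same type but in which the analog of $\sigma(u)$ is bounded below by a positive constant, at the cost of a nonlinear Robin boundary condition. Using Lemma \ref{lemma1} one translates conditions 1.--4.\ into the hypotheses Chen requires for the transformed data: the bounds $\|u_0\|_\infty,\|u_1\|_\infty<u_*$ ensure the transformed Dirichlet and Robin data are finite, $\sigma\in C^1$ with $\|\sigma\|_{C^1}\le\mu$ controls the coefficients of the new system, and, crucially, the normalization $\int_0^{u_*}ds/\sigma(s)=\infty$ makes the inverse substitution defined on an entire half-line --- so that \emph{any} finite $L^\infty$ bound on $\hat u$ translates back into a bound $u\le N<u_*$ strictly below the critical temperature.

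Next I would prove existence for the transformed system. Since it is uniformly elliptic, one can set up a Leray--Schauder fixed point argument: freeze the temperature-like unknown, solve the linear elliptic equation for the potential-like unknown $\hat\varphi$, then solve the temperature equation with right-hand side built from $\nabla\hat\varphi$ --- handled, as in \eqref{weaksol}, by the $(\varphi_0-\varphi)$ reformulation that keeps the nonlinear term in $L^2$ rather than merely $L^1$ --- and verify compactness of the resulting map by elliptic regularity. The fixed point theorem requires an a priori bound, and obtaining it is the heart of the matter. Following Chen, one works with a differential inequality for $\hat u$: comparing with a suitable supersolution, running a De Giorgi--Moser-type iteration, and --- this is the essential point --- using the Dirichlet portion of the boundary, on which the datum satisfies $\|u_0\|_\infty<u_*$, as an anchor that pins $u$ below the critical value somewhere in $\overline{\Omega}$, one arrives at $0\le u\le N<u_*$ with $N$ depending only on $\Gamma_D$, $u_0$, $\varphi_0$ and $\|\sigma\|_{C^1}$. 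The smallness of $\|\varphi_0\|_{W^{1,\infty}(\Omega)}$ enters here, bounding the Joule heating $\sigma(u)|\nabla\varphi|^2$ so that $u$ cannot heat past $u_*$. \textbf{This uniform $L^\infty$ bound, and in particular its independence of $u_1$ and of the control $\beta$, is the step I expect to be the main obstacle}: since $\beta$ is not assumed bounded away from zero, the Robin term supplies no coercivity, so the bound must come entirely from the structure of the system together with the Dirichlet anchor, not from the boundary condition --- which is precisely why the restrictions of \cite{Hry} can be dropped.

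Finally, undoing the substitution yields a weak solution $(u,\varphi)$ of \eqref{Prob1.1} in the sense of \eqref{weaksol} with $0\le u\le N<u_*$; a standard truncation argument also gives $u\ge 0$. Since $u$ is bounded away from $u_*$, we have $\sigma(u)\ge\sigma(N)>0$, so the first equation of \eqref{Prob1.1} is genuinely uniformly elliptic, and testing it with $\varphi-\varphi_0$ gives a uniform $H^1$ bound on $\varphi$. From this point Chen's bootstrap applies: with the $L^\infty$ bound in place and the boundary data $u_0,u_1,\varphi_0$ of class $C^1$, one obtains H\"older continuity $u,\varphi\in C^\alpha(\overline{\Omega})$ for some $0<\alpha<1$ --- schematically, De Giorgi--Nash--Moser estimates for $u$, then Theorem 7.2(iii) of \cite{Rod} (which needs only $\sigma(u)\in C^0$) to place $\nabla\varphi$ in a better $L^p$ space with $p>2$, then elliptic regularity for $-\Delta u=\sigma(u)|\nabla\varphi|^2$, iterated. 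Tracking the constants through this bootstrap shows that none depends on $u_1$ or $\beta$, which gives exactly the form of the bound asserted and completes the proof.
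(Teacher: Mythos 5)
Your overall strategy---Diesselh\"orst--Chen substitution, an a priori $L^\infty$ bound that pins $u$ strictly below $u_*$, and a regularity bootstrap to $C^\alpha$---matches the paper's. The genuine difference is in the existence step. You propose to prove existence of the transformed system \eqref{Chensyst} directly via a Leray--Schauder fixed point, which requires handling the nonlinear Robin condition $\frac{\partial v}{\partial n}+\frac{\beta}{a(v)}(F^{-1}(v)-u_1)=0$ and running a De Giorgi--Moser iteration from scratch. The paper instead takes a shortcut that sidesteps all of this: it first establishes the a priori bound $0\leq u\leq N<u_*$ (Theorem \ref{Linftyest}), observing that $N$ depends only on $\|\sigma\|_{C^1}$, $u_0$, $\varphi_0$; then it \emph{truncates} the conductivity, replacing $\sigma$ by a $\sigma_n$ that agrees with $\sigma$ on $[0,n]$, is uniformly positive, and has the same $C^1$ norm. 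For $n<u_*$ one can invoke the existence theorem of Howison--Rodrigues--Shillor \cite{How} directly (their hypothesis of uniform ellipticity is satisfied), and because the a priori bound is independent of $n$, taking $n>N$ forces the solution of the $\sigma_n$-problem to in fact solve the original $\sigma$-problem. This is considerably more economical than re-deriving existence via Leray--Schauder, and it explains why the a priori bound must be independent of $n$, not merely of $\beta$. Also, for the H\"older bootstrap the paper does not need the iteration you describe via Theorem \ref{meyers} at this stage: it cites Lemmas 3 and 5 of \cite{Che}, which give $C^\alpha$ directly once the $L^\infty$ bound and the $C^1$ boundary data are in place; the $W^{1,r}$ improvement of Theorem \ref{meyers} is used later and separately, for the optimal control. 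Your route could in principle be carried through, but as written it leaves the Leray--Schauder compactness and degree argument (in the presence of the nonlinear Robin term) and the uniformity of your De Giorgi--Moser bound in $\beta$ unaddressed, which are precisely the places where the paper's truncation-and-cite shortcut pays off.
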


For the proof of the existence of optimal control we will need a
better regularity for $\varphi$ than simply $H^{1}$, which follows
from the following result (see \cite{Rod}, Theorem 7.2(iii), p.
82).
\begin{theorem}\label{meyers}
Let $\Omega\subset{\R}^d$ be a bounded domain with
$\partial\Omega\in C^1$ and suppose that $u\in H_0^1(\Omega)$ is
the unique solution to
\begin{equation}
\int_\Omega(a_{ij}\phi_{x_i}+f_j)v_{x_j}\,dx=0,\hskip0.03in
\forall v\in H_0^1(\Omega),\nonumber
\end{equation}
where $a_{ij}\in C^0(\overline{\Omega}))$ and strictly elliptic. Then $\phi\in W_0^{1,r}(\Omega)$
for each $2<r<\infty$, whenever $f_j\in L^r(\Omega)$,
$j=1,\ldots,n$.
\end{theorem}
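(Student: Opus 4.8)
This is the standard $L^p$-regularity (Calder\'on--Zygmund) theory for divergence-form elliptic equations with \emph{continuous} coefficients, and the natural route is a frozen-coefficient perturbation argument built on top of the constant-coefficient case; write $A=(a_{ij})$. First I would record the model estimate: if $A_0$ is a constant strictly elliptic matrix and $\psi\in W^{1,s}_0$ solves $-\mathrm{div}(A_0\nabla\psi)=\mathrm{div}\,G$ on $\R^d$ or on a half-space with zero Dirichlet data, then $\|\nabla\psi\|_{L^s}\le C(d,s,\text{ellipticity})\,\|G\|_{L^s}$ for every $1<s<\infty$; after a linear change of variables this reduces to the same statement for $-\Delta$, which follows from the $L^s$-boundedness of the Riesz transforms. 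Nothing here uses continuity of the coefficients or smoothness of $\partial\Omega$ -- those enter only in the localization below.

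Second, localize and freeze. Since $a_{ij}\in C^0(\overline\Omega)$ and $\overline\Omega$ is compact, the $a_{ij}$ are uniformly continuous, so given any $\e>0$ there is $\rho>0$ with $\operatorname{osc}_{B_\rho(x_0)\cap\overline\Omega}a_{ij}<\e$ for all $x_0$. Cover $\overline\Omega$ by finitely many balls $B_\rho(x_k)$ with a subordinate partition of unity $\{\theta_k\}$. For each $k$, $\phi_k:=\theta_k\phi$ satisfies $-\mathrm{div}(A(x_k)\nabla\phi_k)=\mathrm{div}\,G_k+H_k$, where $G_k=(A(x)-A(x_k))\nabla\phi_k-\theta_k f-\phi\,A\nabla\theta_k$ collects the small-oscillation correction, the genuine source, and a cutoff term, while $H_k=\nabla\theta_k\cdot f-\nabla\theta_k\cdot A\nabla\phi$ is a non-divergence lower-order term supported in the overlap region (rewritten as $\mathrm{div}$ of an $L^{s^*}$ field via $\Delta w=H_k$, $w\in W^{2,s}$, $\tfrac1{s^*}=\tfrac1s-\tfrac1d$). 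Applying the model estimate -- for boundary balls after first flattening $\partial\Omega$ by a $C^1$ chart, which preserves strict ellipticity, continuity of the transformed coefficients, and the homogeneous Dirichlet condition -- absorbing $\|(A(x)-A(x_k))\nabla\phi_k\|_{L^q}\le C\e\|\nabla\phi_k\|_{L^q}$ once $\e$ is fixed with $C\e<\tfrac12$, and summing over $k$, yields a global estimate of the shape $\|\nabla\phi\|_{L^q}\le C\bigl(\|f\|_{L^q}+\|\phi\|_{L^{q}}+\|\nabla\phi\|_{L^{s}}\bigr)$. The $W^{1,q}_0$ membership of $\phi$ needed to run the absorption non-circularly is secured by a Neumann-series fixed point for $\nabla\phi_k$ in $L^q$, or by regularizing $f$ and passing to the limit.

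Third, bootstrap the exponent. Starting from $\phi\in H^1_0=W^{1,2}_0$, the previous step with $s=\min(p_n,r)$ gives $\nabla\phi\in L^{p_{n+1}}$ with $p_{n+1}=\min(r,p_n^*)$, $\tfrac1{p_n^*}=\tfrac1{p_n}-\tfrac1d$ (and once $p_n\ge d$ one jumps directly to any $r<\infty$ via $W^{1,p_n}\hookrightarrow L^\infty$ or $\hookrightarrow L^q$); since each Sobolev step raises integrability by a fixed amount, finitely many iterations reach any prescribed $r$, and the bounds being global with the zero trace preserved gives $\phi\in W^{1,r}_0(\Omega)$. The step I expect to be the genuine obstacle is the boundary analysis: arranging a $C^1$ boundary chart whose derivative still has small modulus of continuity on small balls (so the flattened coefficients retain the small-oscillation property -- this is exactly where $C^1$ regularity of $\partial\Omega$ is used), and keeping the constants in the half-space model estimate uniform over the finitely many boundary charts; the interior estimate and the exponent iteration are comparatively routine bookkeeping.
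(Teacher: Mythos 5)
The paper does not prove this theorem: it is quoted from Rodrigues \cite{Rod}, Theorem~7.2(iii), and used as a black box to upgrade $\varphi$ from $H^1$ to $W^{1,r}$, so there is no internal argument to compare against. Your sketch is the standard route to this continuous-coefficient $L^p$ theory --- the frozen-coefficient Calder\'on--Zygmund argument built on the constant-coefficient whole-space/half-space model estimate --- and it is essentially sound. The localization and decomposition are set up correctly (a small-oscillation divergence-form piece $(A(x)-A(x_k))\nabla\phi_k$ to be absorbed, the genuine source $\theta_k f$, a zeroth-order commutator $\phi\,A\nabla\theta_k$, and a non-divergence term $H_k$ promoted by one Sobolev exponent by writing it as $\mathrm{div}\,\nabla w$ with $\Delta w=H_k$); the non-circularity of the absorption is properly flagged and handled by a Neumann-series fixed point or by regularizing $f$; and the exponent iteration $p_{n+1}=\min(r,p_n^{*})$ terminates in finitely many steps. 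You also identify exactly where the hypotheses earn their keep: uniform continuity of $a_{ij}$ on the compact $\overline\Omega$ gives a uniformly small oscillation $\epsilon$ on balls of a fixed radius, and the $C^{1}$ boundary is what keeps the flattened coefficients continuous with controllably small oscillation so the half-space estimate applies with constants uniform over the finitely many charts. The only item I would nail down in a full write-up is the choice of boundary data for the auxiliary $w$ solving $\Delta w = H_k$ on each chart ball, so that after moving $\nabla w$ into the divergence-form source the modified identity is still tested against the right class of $H^1_0$ functions on the half-ball; this is routine, but easy to fudge.
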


Applying this theorem to the first equation in \eqref{Prob1.1}
rewritten for $\varphi-\varphi_{0}$ we see that $a_{ij}=\sigma\circ u$
is even in $C^\alpha(\overline{\Omega})$, and strictly positive because $u<u_*$,
while $f_j=(\sigma\circ u)\nabla\varphi_0$ is even in $C^0(\overline{\Omega})$. Therefore, Theorem
\ref{meyers} guarantees that $\varphi-\varphi_{0}\in W_0^{1,r}(\Omega)$, and hence $\varphi\in W^{1,r}(\Omega)$, for each $2<r<\infty$.

Thus, we can always choose $r$ and then $s$ in such a way that
\begin{eqnarray}
&r>d>2, s\in\Big[1,\frac{2d}{d-2}\Big), {\rm and}\nonumber\\
\label{r}\\[-2.2ex]
&\frac{1}{s}+\frac{1}{r}=\frac{1}{2}.\nonumber
\end{eqnarray}
Namely, given $d>2$ choose $r=2(d-1)$ and $s=2(d-1)/(d-2)$.
Observe that this choice of $s$ guarantees that $s\in
[1,\frac{2d}{d-2})$ and hence $H^1(\Omega)\subset\subset
L^s(\Omega)$ for all $s\in [1,\frac{2d}{d-2})$. On the other hand,
the selection of $r>d$ will guarantee the compact embedding
$W^{1,r}(\Omega)\subset\subset C(\bar{\Omega})$. These (compact)
embeddings will be used when both existence of optimal control and
derivation of optimality system are considered.



\subsection{Diesselhorst-Chen substitution}

The proof of Theorem \ref{existreg} relies on analyzing a system
obtained from \eqref{Prob1.1} by a modification of a well-known substitution.
The usual Diesselhorst substitution \cite{Die} is
$\tilde\psi=\frac{1}{2}\varphi^2+F(u)$, where $F(u):=\int_0^u
\frac{ds}{\sigma(u)}$. We will use a modified substitution,
introduced by Chen \cite{Che}, $\psi:=(\varphi-\varphi_0)^2+F(u)$
to accommodate the Robin boundary condition. First, note that $F$
maps $[0,u_*)$ onto $[0,\infty)$ since $\int_0^u
\frac{ds}{\sigma(s)}=\infty$ by assumption, and
$F^{-1}:[0,\infty)\rightarrow [0,u*)$ is well defined. Let
$v:=\int_0^u\frac{ds}{\sigma(s)}=F(u)$, not to be confused with $v$ in \eqref{weaksol},
which was just a generic test function. We introduce
$a(v):=\sigma(F^{-1}(v))$, and hence
$\nabla v=F'(u)\nabla u=\frac{1}{\sigma(u)}\nabla u=\frac{1}{\sigma(F^{-1}(v))}
\nabla u=\frac1{a(v)}\nabla u$. Therefore $\nabla u=a(v)\nabla v$, and the original system (\ref{statet}) can be
written as:
\begin{eqnarray}\label{Chensyst}
{\ds \nabla\cdot(a(v)\nabla\varphi) }&=& {0  \quad {\rm in}\
\Omega, }\nonumber\\
{\ds \nabla\cdot(a(v)\nabla v)+{a}(v)|\nabla\varphi|^{2} }&=&{0
\quad {\rm
in}\ \Omega,} \nonumber\\[-1.5ex]
\label{statetDC} \\[-1.8ex]
{\ds \frac{\partial v}{\partial n}+\frac{\beta}{a(v)}
(F^{-1}(v)-u_1)}&=&{0 \quad {\rm
on}\ \Gamma_R,} \nonumber\\
 {\ds v}&=&{F(u_0)  \quad {\rm on}\
 \Gamma_D,}\nonumber\\
{\ds \varphi}&=&{\varphi_0  \quad {\rm on}\
 \partial\Omega.}\nonumber
\end{eqnarray}
As with the system \eqref{Prob1.1} we understand \eqref{Chensyst}
in the weak sense analogous to \eqref{weaksol}. One advantage of this new system is that now
$a(v)>0$ on $[0,\infty)$, unlike $\sigma(u)$. We can now outline the proof of Theorem \ref{existreg}
modulo the a priori $L^\infty$ estimate of Theorem \ref{Linftyest}, whose very technical proof
is postponed until the next section.

{\it Proof of Theorem \ref{existreg}.}
By Theorem \ref{Linftyest} any weak solution to \eqref{Chensyst} is in $L^\infty$, which means that the corresponding weak solution to \eqref{Prob1.1} is bounded away from the critical temperature:
\begin{eqnarray}\label{mainthmu}
0\leq u\leq F^{-1}(\|v\|_\infty)=:M<u_*\,.
\end{eqnarray}
Let $\sigma_n(s)$ be equal to $\sigma(s)$ for $s\leq n$, satisfy $\sigma_n(s)\geq\sigma(n)/2$ for all $s\geq0$, and
$\|\sigma_n\|_{C^1}=\|\sigma\|_{C^1}$. Such a $\sigma_n$ can always be produced by interpolation.

By \eqref{Linftyest} and \eqref{capC} the $L^\infty$ estimate for $\|v\|_{\infty}$, and hence the value of $N$, only depend on $\|\sigma\|_{C^1}=\mu$ and $u_0,\varphi_0$.
In particular, it is independent of $\beta$, and of $n$. But if $n<u_*$ then $\sigma_n$ is bounded away from $0$ for all $s\geq0$, so by the main result of \cite{How} system \eqref{Prob1.1}
with $\sigma$ replaced by $\sigma_n$ has a weak solution. For $n>N$ this weak solution will actually be a solution to \eqref{Prob1.1} with $\sigma$ itself, and the corresponding $v,\varphi$ will be a weak solution to \eqref{Chensyst} bounded in $L^\infty$.
Together with the $C^1$ regularity of $v_0,\varphi_0$ by Lemmas 3 and 5 of \cite{Che} $v,\varphi$ are then bootstrapped to $C^\alpha(\overline{\Omega})$ for some $\alpha\in(0,1)$.
Since $u=F^{-1}(v)$ and $F\in C^2$ under our assumptions about $\sigma$ the same is true of $u$. $\endproof$

The result of \cite{How} also guarantees uniqueness of solution when the boundary data $v_0,\varphi_0$ are sufficiently ``small". Unfortunately, this does not extend here.
Of course, there is uniqueness for each $\sigma_m$, but it is conceivable that \eqref{Prob1.1} also has weak solutions that are not bounded away from $u_*$ even for small boundary data.
They would not solve \eqref{Prob1.1} with $\sigma_m$ in place of $\sigma$ for any $m<u_*$, and the Diesselhorst-Chen substitution would not be defined for them, so there would not be a
corresponding $v$.

\subsection{$L^\infty$ estimates}

In this section we give a proof of Theorem \ref{Linftyest}, and therefore complete the proof of Theorem \ref{existreg}.
This is accomplished through a series of lemmas following the general outline of \cite{Che}. By assumption on $\sigma$, we have $v=F(u)=\int_0^u\frac{ds}{\sigma(s)}\rightarrow\infty$ as
$u\rightarrow u_*$. Therefore, if we can show that
$\|v\|_\infty<\infty$ not only will we have $\|u\|_\infty<\infty$
but also $0\leq u\leq F^{-1}(\|v\|_\infty)<u_*$ a.e., that is $u$
is bounded away from the critical temperature everywhere in the
domain.
\begin{lemma}\label{lemma1}
The function $a(v)$ is strictly positive, monotone decreasing, and
\begin{equation}\label{bona}
e^{-\mu|y|}\leq\frac{a(v+y)}{a(v)}\leq e^{\mu|y|}.
\end{equation}
Moreover, for any $p\geq 2$, we have $$\ds
{\frac{a(v)}{v^p}\int_0^v\frac{s^{p-2}}{a(s)}\,ds}\xrightarrow[v\to\infty]{}0\,.
$$
\end{lemma}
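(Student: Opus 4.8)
The plan is to read off all four assertions directly from the construction of $a$ together with condition~2 on $\sigma$. The first two are essentially definitional: since $F'=1/\sigma>0$ on $[0,u_*)$, the map $F$ is a strictly increasing $C^1$ bijection of $[0,u_*)$ onto $[0,\infty)$ (surjectivity uses $\int_0^{u_*}ds/\sigma(s)=\infty$), so $F^{-1}:[0,\infty)\to[0,u_*)$ is strictly increasing and, by the inverse function theorem, $C^1$ with $(F^{-1})'(v)=\sigma(F^{-1}(v))=a(v)$. Strict positivity of $a$ follows because $\sigma>0$ on $[0,u_*)$ and $F^{-1}(v)\in[0,u_*)$; and $a=\sigma\circ F^{-1}$ is monotone decreasing because $\sigma$ is decreasing while $F^{-1}$ is increasing.

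For the two-sided bound \eqref{bona} I would pass to $\ln a$. Since $\sigma$ and $F^{-1}$ are $C^1$, the chain rule gives $a'(v)=\sigma'(F^{-1}(v))\,(F^{-1})'(v)=\sigma'(F^{-1}(v))\,a(v)$, hence $(\ln a)'(v)=\sigma'(F^{-1}(v))$. Because $\|\sigma\|_{C^1([0,u_*])}\le\mu$ forces $|\sigma'|\le\mu$ on $[0,u_*]$ and $F^{-1}(v)\in[0,u_*)$ for every $v\ge0$, we get $|(\ln a)'|\le\mu$ on $[0,\infty)$. Integrating this over the interval with endpoints $v$ and $v+y$ (of length $|y|$, and assuming $v,v+y\ge0$ so that $a(v+y)$ makes sense) gives $|\ln a(v+y)-\ln a(v)|\le\mu|y|$, which is \eqref{bona} after exponentiation.

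The limit is the step where one might anticipate having to work, but it reduces to monotonicity alone, with no information on the decay rate of $a$. For $p\ge2$ and $0\le s\le v$ one has $a(s)\ge a(v)>0$ and $s^{p-2}\ge0$, hence $s^{p-2}/a(s)\le s^{p-2}/a(v)$; integrating,
\[
\int_0^v\frac{s^{p-2}}{a(s)}\,ds\le\frac{1}{a(v)}\int_0^v s^{p-2}\,ds=\frac{v^{p-1}}{(p-1)\,a(v)}.
\]
Multiplying by $a(v)/v^p$ yields $0\le \dfrac{a(v)}{v^p}\displaystyle\int_0^v\frac{s^{p-2}}{a(s)}\,ds\le\dfrac{1}{(p-1)v}$, which tends to $0$ as $v\to\infty$. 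The hypothesis $p\ge2$ is used twice here: to make $s^{p-2}$ nonnegative on $[0,v]$ so the comparison is valid, and to keep $\int_0^v s^{p-2}\,ds$ finite.

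Thus the only genuinely computational ingredient is the identity $(\ln a)'=\sigma'\circ F^{-1}$, and the only nonroutine idea is the elementary comparison $a(s)\ge a(v)$ for $s\le v$; I do not foresee a serious obstacle. The point of isolating the lemma is that every conclusion rests solely on the standing hypotheses on $\sigma$, so it can be invoked freely in the $L^\infty$ estimates of the next subsection regardless of the fine behavior of $\sigma$ near $u_*$.
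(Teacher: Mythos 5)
Your proof is correct and follows essentially the same route as the paper: positivity and monotone decrease read off from $a=\sigma\circ F^{-1}$, the two-sided bound via $(\ln a)'=\sigma'\circ F^{-1}$ and integration, and the limit via the comparison $a(s)\ge a(v)$ for $s\le v$ (the paper additionally bounds $s^{p-2}\le v^{p-2}$, giving $1/v$ in place of your slightly sharper $1/((p-1)v)$, but this is cosmetic). One small inaccuracy in your commentary: $s^{p-2}\ge 0$ holds for any real $p$ when $s>0$, so $p\ge 2$ is not needed for nonnegativity; it is invoked elsewhere in the paper, and the comparison argument itself only needs $p>1$ for $\int_0^v s^{p-2}\,ds$ to converge.
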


\begin{proof}
Since $F^{-1}([0,\infty))=[0,u_*)$ and $\sigma>0$ on $[0,u_*)$, it
follows that $a(v)>0$. Since $\sigma\geq 0$ the function $F$ is
monotone increasing, and therefore so is $F^{-1}$. As $\sigma$ is
monotone decreasing and $a=\sigma\circ F^{-1}$, we have the same
for $a$. To prove (\ref{bona}), consider
$$
\ln{\frac{a(v+y)}{a(v)}}=\ln{a(v+y)}-\ln{a(v)}=\int_v^{v+y}(\ln{a(s)})^\prime\,
ds=\int_v^{v+y}\frac{a^\prime(s)}{a(s)}\,ds\,,
$$
where we assumed $y\geq 0$ for definiteness. Now by the chain rule
\begin{multline}
a^\prime(s)=a^\prime(F^{-1}(s))(F^{-1}(s))^\prime=\sigma^\prime(F^{-1}(s))\frac{1}{F^\prime(F^{-1}(s))}\\
=\sigma^\prime(F^{-1}(s))\frac{1}{1/\sigma(F^{-1}(s))}=\sigma^\prime(F^{-1}(s))\,a(s).\nonumber
\end{multline}
Since $|\sigma^{\prime}|\leq\mu$ this implies $|\frac{a^\prime(s)}{a(s)}|\leq \mu$, and therefore
\begin{eqnarray}
\Big|\ln\frac{a(v+y)}{a(v)}\Big|\leq \int_v^{v+y}\mu\,ds&=&\mu|y|,
{\rm \quad or \quad equivalently}\nonumber\\
-\mu|y|\leq\ln\frac{a(v+y)}{a(v)}&\leq & \mu|y|.\nonumber
\end{eqnarray}
Exponentiating the last inequality gives (\ref{bona}). The claim
about the limit follows from the monotone decrease of $a(v)$.
Namely, we have
\begin{eqnarray}
\frac{a(v)}{v^p}\int_0^v\frac{s^{p-2}}{a(s)}\,ds=\frac{1}{v^p}\int_0^v
s^{p-2}\frac{a(v)}{a(s)}\,ds\leq\frac{1}{v^p}\int_0^v v^{p-2}\cdot
1\, ds=\frac{1}{v}\xrightarrow[v\to\infty]{}0\,. \nonumber
\end{eqnarray}
\end{proof}

Due to non-vanishing of $a(v)$ we immediately have from the
maximum principle that for any weak solution to (\ref{statetDC}),
$v\geq 0$ and $\|\varphi\|_\infty\leq \|\varphi_0\|_\infty$. Note
that we could not infer this from (\ref{statet}) directly
since $\sigma(u)$ may be vanishing. The hard part is to obtain an
$L^\infty$ estimate for $v$, and therefore, for $u=F^{-1}(v)$. To
this end, we set $\psi:=(\varphi-\varphi_0)^2+v$. Then
$\nabla\psi=2(\varphi-\varphi_0)\nabla(\varphi-\varphi_0)+\nabla
v$, and $\frac{\partial\psi}{\partial n}=\frac{\partial
v}{\partial n}, \psi=v$ on $\partial\Omega$ since
$\varphi=\varphi_0$ on $\partial\Omega$. Although we write differential identities for simplicity here and below they should be understood in the weak form as in \eqref{weaksol}.

\begin{lemma}\label{lemma2}
For a weak solution $(v,\varphi)$ to (\ref{statetDC}) and
$\psi:=(\varphi-\varphi_0)^2+v$, one has
\begin{eqnarray}
\nabla\cdot(a(v)\nabla\psi)&=&a(v)|\nabla\varphi|^2-2\nabla\cdot((\varphi-\varphi_0)a(v)\nabla\varphi_0)-2a(v)\nabla\varphi\cdot\nabla\varphi_0.\nonumber
\end{eqnarray}
Moreover,
\begin{eqnarray}
-\nabla\cdot(a(v)\nabla\psi)&\leq&a(v)|\nabla\varphi_0|^2+2\nabla\cdot((\varphi-\varphi_0)a(v)\nabla\varphi_0).\nonumber
\end{eqnarray}
\end{lemma}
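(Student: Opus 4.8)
The plan is a direct computation in which, as noted right after the statement, every differential identity is read weakly: one multiplies by a test function $\zeta$ and integrates by parts against the weak formulations of the two equations in \eqref{statetDC}. Before doing so I would record the integrability that makes this legitimate. By the maximum-principle remark preceding the lemma, $\|\varphi\|_\infty\le\|\varphi_0\|_\infty$, and $0<a(v)\le\mu$ because $\|\sigma\|_{C^1}\le\mu$ and $F^{-1}$ takes values in $[0,u_*)$; combined with $\varphi_0\in W^{1,\infty}(\Omega)$ and $\nabla v,\nabla\varphi\in L^2(\Omega)$, this shows that $a(v)\nabla\psi$, $(\varphi-\varphi_0)a(v)\nabla\varphi$ and $(\varphi-\varphi_0)a(v)\nabla\varphi_0$ all lie in $L^2(\Omega)$, so their distributional divergences are well defined. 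This uses no bound on $v$ itself, so there is no circularity with the $L^\infty$ estimate that is still to be proved.

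Next I would differentiate $\psi=(\varphi-\varphi_0)^2+v$, obtaining $\nabla\psi=\nabla v+2(\varphi-\varphi_0)\nabla(\varphi-\varphi_0)$, hence
$$a(v)\nabla\psi=a(v)\nabla v+2(\varphi-\varphi_0)a(v)\nabla\varphi-2(\varphi-\varphi_0)a(v)\nabla\varphi_0,$$
and take the divergence term by term. The second equation of \eqref{statetDC} gives $\nabla\cdot(a(v)\nabla v)=-a(v)|\nabla\varphi|^2$. For the middle term I would use the product rule $\nabla\cdot\big((\varphi-\varphi_0)a(v)\nabla\varphi\big)=\nabla(\varphi-\varphi_0)\cdot a(v)\nabla\varphi+(\varphi-\varphi_0)\,\nabla\cdot(a(v)\nabla\varphi)$; since $\varphi-\varphi_0\in H^1_0(\Omega)$, the product $(\varphi-\varphi_0)\zeta$ is an admissible test function in the weak form of the first equation of \eqref{statetDC}, so the last term drops. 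Writing $\nabla(\varphi-\varphi_0)=\nabla\varphi-\nabla\varphi_0$ turns the middle term into $a(v)|\nabla\varphi|^2-a(v)\nabla\varphi\cdot\nabla\varphi_0$, and collecting everything yields
$$\nabla\cdot(a(v)\nabla\psi)=a(v)|\nabla\varphi|^2-2\nabla\cdot\big((\varphi-\varphi_0)a(v)\nabla\varphi_0\big)-2a(v)\nabla\varphi\cdot\nabla\varphi_0,$$
which is the first assertion.

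For the inequality I would rewrite this identity for $-\nabla\cdot(a(v)\nabla\psi)$ and bound the cross term using $a(v)\ge0$, Cauchy--Schwarz and Young: $2a(v)\nabla\varphi\cdot\nabla\varphi_0\le a(v)|\nabla\varphi|^2+a(v)|\nabla\varphi_0|^2$. Substituting cancels the $a(v)|\nabla\varphi|^2$ terms and leaves $-\nabla\cdot(a(v)\nabla\psi)\le a(v)|\nabla\varphi_0|^2+2\nabla\cdot\big((\varphi-\varphi_0)a(v)\nabla\varphi_0\big)$, as claimed. I do not expect a genuine obstacle; the only thing requiring care is the weak bookkeeping --- confirming the products above are in $L^2$ so that the divergences make sense and the integration-by-parts boundary terms (which involve $\varphi-\varphi_0$, vanishing on $\partial\Omega$) are harmless, and justifying the product-rule step via $\varphi-\varphi_0\in H^1_0(\Omega)$. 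No use is made of the Robin condition, which is consistent with its absence from the statement.
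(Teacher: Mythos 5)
Your proposal is correct and follows essentially the same route as the paper: decompose $a(v)\nabla\psi$ into the three products, apply the second PDE to $\nabla\cdot(a\nabla v)$, use the first PDE (tested against $(\varphi-\varphi_0)\zeta$) to kill $(\varphi-\varphi_0)\nabla\cdot(a\nabla\varphi)$, then Young's inequality $2a\nabla\varphi\cdot\nabla\varphi_0\le a|\nabla\varphi|^2+a|\nabla\varphi_0|^2$ for the second assertion. The added remarks on $L^2$ integrability and on $(\varphi-\varphi_0)\zeta$ being an admissible test function simply make explicit the weak-form bookkeeping that the paper signals but suppresses.
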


\begin{proof}
We write $a$ instead of $a(v)$ for short. Since
$\nabla\psi=2(\varphi-\varphi_0)\nabla(\varphi-\varphi_0)+\nabla
v$, by direct computation we obtain:
$$
\nabla\cdot(a\nabla\psi)=\nabla\cdot(a\nabla
v)+2\nabla\cdot(a(\varphi-\varphi_0)\nabla\varphi)\\-2\nabla\cdot(a(\varphi-\varphi_0)\nabla\varphi_0)\,.
$$
Simplifying the middle term, we have
$$
2\nabla\cdot(a(\varphi-\varphi_0)\nabla\varphi)=2(\varphi-\varphi_0)\nabla\cdot(a\nabla\varphi)+
2a\nabla\varphi\cdot\nabla(\varphi-\varphi_0)=0+2a|\nabla\varphi|^2-2a\nabla\varphi\cdot\nabla\varphi_0
$$
by the first equation in (\ref{statetDC}). By the second equation in (\ref{statetDC}):
$\nabla\cdot(a\nabla v)=-a|\nabla\varphi|^2$, so that
$$
\nabla\cdot(a\nabla\psi)=-a|\nabla\varphi|^2+2a|\nabla\varphi|^2-2a\nabla\varphi\cdot\nabla\varphi_0-2\nabla\cdot((\varphi-\varphi_0)a\nabla\varphi_0),
$$
which yields the desired equation. To obtain the inequality, note
that $-a|\nabla\varphi|^2+2a\nabla\varphi\cdot\nabla\varphi_0\leq
a|\nabla\varphi_0|^2$ by applying $2xy\leq x^2+y^2$ with
$x=a^{1/2}\nabla\varphi$ and $y=a^{1/2}\nabla\varphi_0$.
\end{proof}

Since $\psi=(\varphi-\varphi_0)^2+v$ and we already know
that $\|\varphi\|_\infty\leq \|\varphi_0\|_\infty$ it suffices to
show that $\|\psi\|_\infty<\infty$. In view of Lemma~\ref{lemma2} we will be working with the
inequality
\begin{eqnarray}
-\nabla\cdot(a(v)\nabla\psi)&\leq&a(v)|\nabla\varphi_0|^2+2\nabla\cdot(a(v)(\varphi-\varphi_0)\nabla\varphi_0),\nonumber\\
\psi&=&F(u_0) {\quad\rm on \quad} \Gamma_D,\nonumber\\
[-1.5ex]
\label{vi} \\[-1.8ex]
\frac{\partial\psi}{\partial
n}&+&\frac{\beta}{a(v)}(F^{-1}(v)-u_1)=0 {\quad\rm on \quad}
\Gamma_R.\nonumber
\end{eqnarray}

Following Chen's suggestion in \cite{Che} we set
$\xi(\psi):=\int_M^\psi\frac{s^{p-2}}{a(s)}\, ds$ (where $M$ is a
positive constant to be specified later), multiply both sides of
(\ref{vi}) by $\xi$, and integrate by parts. We set
$\psi_M:=\max\{M,\psi\}$.

\begin{lemma}\label{lemma3}
For any $p\geq 2$ and $M>F(\|u_0\|_\infty)$, the following
estimate holds
\begin{multline}\label{vvi}
\int_\Omega\frac{a(v)}{a(\psi_M)}\psi_M^{p-2}|\nabla\psi_M|^2+\int_{\Gamma_M}\beta\xi(\psi_M)(F^{-1}(v)-u_1)\\
\leq\int_\Omega\frac{a(v)}{a(\psi_M)}|\nabla\varphi_0|^2
a(\psi_M)\xi(\psi_M)-2\int_{\Omega}\frac{a(v)}{a(\psi)}(\varphi-\varphi_0)\psi_M^{p-2}\nabla\varphi_0\nabla\psi_M\,,
\end{multline}
where $\Gamma_M:=\partial\Omega\cap\{\psi>M\}$
\end{lemma}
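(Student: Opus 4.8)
The plan is to take the differential inequality \eqref{vi}, test it against the nonnegative multiplier $\xi(\psi_M)$, and integrate by parts, handling the Dirichlet and Robin parts of the boundary separately. First I would note that $\xi(\psi)=\int_M^\psi s^{p-2}/a(s)\,ds$ vanishes on $\{\psi\le M\}$, so $\xi(\psi)=\xi(\psi_M)$ everywhere, and in particular $\xi(\psi_M)=0$ on $\Gamma_D$ because there $\psi=F(u_0)\le F(\|u_0\|_\infty)<M$ by the choice of $M$; this kills the boundary term coming from $\Gamma_D$ when we integrate by parts. On $\Gamma_R$ the integration by parts produces $-\int_{\Gamma_R}a(v)\frac{\partial\psi}{\partial n}\xi(\psi_M)$, and substituting the Robin condition $\frac{\partial\psi}{\partial n}=-\frac{\beta}{a(v)}(F^{-1}(v)-u_1)$ turns this into $\int_{\Gamma_R}\beta\,\xi(\psi_M)(F^{-1}(v)-u_1)$. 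Since $\xi(\psi_M)$ is supported on $\{\psi>M\}$ this is really an integral over $\Gamma_M=\partial\Omega\cap\{\psi>M\}$, matching the second term on the left of \eqref{vvi}.

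For the interior, the left side gives $\int_\Omega a(v)\nabla\psi\cdot\nabla\big(\xi(\psi_M)\big)$. Here I would use the chain rule: $\nabla\xi(\psi_M)=\xi'(\psi_M)\nabla\psi_M=\frac{\psi_M^{p-2}}{a(\psi_M)}\nabla\psi_M$, and crucially $\nabla\psi\cdot\nabla\psi_M=|\nabla\psi_M|^2$ since $\psi_M=\max\{M,\psi\}$ (on $\{\psi>M\}$ they agree, on $\{\psi<M\}$ both gradients or at least $\nabla\psi_M$ vanish a.e.). This yields exactly $\int_\Omega\frac{a(v)}{a(\psi_M)}\psi_M^{p-2}|\nabla\psi_M|^2$, the first term of \eqref{vvi}. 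On the right side of \eqref{vi}, the term $a(v)|\nabla\varphi_0|^2$ paired with $\xi(\psi_M)$ gives $\int_\Omega a(v)|\nabla\varphi_0|^2\xi(\psi_M)$, which I would rewrite as $\int_\Omega\frac{a(v)}{a(\psi_M)}|\nabla\varphi_0|^2 a(\psi_M)\xi(\psi_M)$ to match the stated form. The divergence term $2\nabla\cdot\big(a(v)(\varphi-\varphi_0)\nabla\varphi_0\big)$ tested against $\xi(\psi_M)$ integrates by parts to $-2\int_\Omega a(v)(\varphi-\varphi_0)\nabla\varphi_0\cdot\nabla\xi(\psi_M)=-2\int_\Omega\frac{a(v)}{a(\psi_M)}(\varphi-\varphi_0)\psi_M^{p-2}\nabla\varphi_0\cdot\nabla\psi_M$; here again the boundary contribution from this integration by parts vanishes on $\Gamma_D$ (where $\xi(\psi_M)=0$) and on $\Gamma_R$ the factor $\nabla\varphi_0\cdot n$ against $(\varphi-\varphi_0)$—actually one should be slightly careful and note $\varphi=\varphi_0$ on all of $\partial\Omega$, so $(\varphi-\varphi_0)$ itself vanishes on the boundary, which disposes of this boundary term cleanly. (The denominator $a(\psi)$ versus $a(\psi_M)$ in the last term of \eqref{vvi} is harmless since they coincide where $\psi_M^{p-2}\ne 0$ forces $\psi>M$, i.e. $\psi=\psi_M$; I would just write $a(\psi_M)$ or remark on the identification.)

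The main obstacle, and the reason this is stated as a lemma rather than a one-line remark, is justifying all of this in the weak sense rather than formally: the identities in \eqref{vi} hold only in the distributional/variational sense, $\psi=(\varphi-\varphi_0)^2+v$ is only $H^1$, and the multiplier $\xi(\psi_M)$ must be shown to be an admissible test function. I would verify that $\xi(\psi_M)\in H^1(\Omega)$ with $\xi(\psi_M)=0$ on $\Gamma_D$, hence $\xi(\psi_M)\in V_D$, using that $\xi$ is Lipschitz on bounded sets and $\psi_M\in H^1$—but $\xi$ has polynomial growth, so strictly one should first truncate, replacing $\psi_M$ by $\psi_M^{(k)}:=\min\{\psi_M,k\}$, run the argument with the bounded Lipschitz multiplier $\xi(\psi_M^{(k)})$, and then pass to the limit $k\to\infty$ by monotone convergence on the (nonnegative) left-hand terms and dominated convergence, once we know $\psi\in L^\infty$—or, more honestly in the logical order of the paper where $L^\infty$ is not yet available, carry the truncation parameter through and only remove it after the Moser-type iteration in the subsequent lemmas closes. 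I would handle this by stating the estimate first for the truncated multiplier and noting the passage to the limit is routine given that every term on the left is nonnegative (so Fatou applies) and the right-hand terms are controlled; alternatively one keeps $k$ and absorbs it later. This technical bookkeeping—admissibility of the test function and the truncation limit—is the only real content beyond the chain-rule computation; everything else is the bookkeeping of integration by parts recorded above.
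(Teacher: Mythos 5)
Your proof is correct and follows essentially the same route as the paper: test the differential inequality \eqref{vi} against the multiplier $\xi(\psi_M)$, integrate by parts, apply the chain rule $\nabla\xi(\psi_M)=\frac{\psi_M^{p-2}}{a(\psi_M)}\nabla\psi_M$, and dispose of the boundary terms using that $\xi(\psi_M)=0$ on $\Gamma_D$ (from $M>F(\|u_0\|_\infty)$) and that the Robin condition governs $\Gamma_R$. Your extra care — spelling out $\nabla\psi\cdot\nabla\psi_M=|\nabla\psi_M|^2$ a.e., noting that it is $(\varphi-\varphi_0)|_{\partial\Omega}=0$ (not $\varphi|_{\partial\Omega}=0$ as the paper loosely writes), flagging the $a(\psi)$ vs.\ $a(\psi_M)$ typo, and especially the admissibility/truncation issue for $\xi(\psi_M)$ as a test function — is all valid; the truncation point is a genuine subtlety the paper does not address, since at this stage $\psi$ is only $H^1$ while $\xi$ has exponential-type growth, so the cut-off-and-pass-to-the-limit step you describe is in fact needed to make the weak formulation legitimate.
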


\begin{proof}
By the product rule,
$$\nabla\cdot(a(v)\nabla\psi)\xi(\psi_M)=\nabla\cdot(a(v)\xi(\psi_M)\nabla\psi)-a(v)\nabla\psi\cdot\nabla\xi(\psi_M),$$
and by definition of $\xi$, we have
$\nabla\xi(\psi_M)=\frac{\psi_M^{p-2}}{a(\psi_M)}\nabla\psi_M$.
Therefore, $$-\int_\Omega
\nabla\cdot(a(v)\nabla\psi)\xi(\psi_M)=\int_\Omega\frac{a(v)}{a(\psi_M)}\psi_M^{p-2}\nabla\psi\cdot\nabla\psi_M-\int_{\partial\Omega}a(v)\xi(\psi_M)\frac{\partial\psi}{\partial
n}.
$$
We now stipulate that $M>F(\|u_0\|_\infty)$. Then
$\psi|_{\Gamma_D}<M$ and $\psi_M|_{\Gamma_D}=M$,\\
$\xi(\psi_M)|_{\Gamma_D}=0$. Hence, the boundary integral reduces
to $\Gamma_R$, where $\psi=\psi_M$ and
$\frac{\partial\psi}{\partial
n}=-\frac{\beta}{a(v)}(F^{-1}(v)-u_1)$. We can also replace $\psi$
by $\psi_M$ in the interior integrals because $\xi(\psi_M)=0$ on
the set where $\psi<\psi_M$. This leads to
$$-\int_\Omega
\nabla\cdot(a(v)\nabla\psi_M)\xi(\psi_M)=\int_\Omega\frac{a(v)}{a(\psi_M)}\psi_M^{p-2}|\nabla\psi_M|^2+\int_{\Gamma_M}\beta\xi(\psi_M)(F^{-1}(v)-u_1),
$$
where $\Gamma_M\subset\Gamma_R$ is the part of the boundary where
$\psi>M$. Similarly,
$$\nabla(a(v)(\varphi-\varphi_0)\xi(\psi_M)\nabla\varphi_0)=\nabla(a(v)(\varphi-\varphi_0)\nabla\varphi_0)\xi(\psi_M)+
a(v)(\varphi-\varphi_0)\frac{\nabla\varphi_0\psi_M^{p-2}}{a(\psi_M)}\nabla\psi_M,
$$ and
\begin{eqnarray}
\int_\Omega\nabla(a(v)(\varphi-\varphi_0)\nabla\varphi_0)\xi(\psi_M)&=&\int_{\partial\Omega}a(v)(\varphi-\varphi_0)\xi(\psi_M)\frac{\partial\varphi_0}{\partial
n}\nonumber\\
&-&\int_{\Omega}\frac{a(v)}{a(\psi_M)}(\varphi-\varphi_0)\nabla\varphi_0\psi_M^{p-2}\nabla\psi_M,\nonumber
\end{eqnarray}
where the boundary term vanishes because
$\varphi|_{\partial\Omega}=0$. Since $\xi(\psi_M)\geq 0$ the
inequality (\ref{vi}) yields (\ref{vvi}).
\end{proof}

We now convert (\ref{vvi}) into an estimate that will be used for
bootstrapping $\psi_M$ (and hence $\psi$) into $L^\infty$. This
involves further increase for $M$.

\begin{lemma}\label{lemma4}
Let $p\geq 2$ and $M>4\|\varphi_0\|_\infty^2+F(\|u_0\|_\infty)$.
Then the following estimate holds
\begin{eqnarray}\label{vvvi}
\int_\Omega\psi_M^{p-2}|\nabla\psi_M|^2\leq\frac{8e^{8\mu
\|\varphi_0\|_{\infty}^2}\|\varphi_0\|_{W^{1,\infty}}^2}{1-2\varepsilon
e^{8\mu
\|\varphi_0\|_{\infty}^2}\|\varphi_0\|_{W^{1,\infty}}^2}\int_\Omega(\varepsilon\psi_M^p+C_{\varepsilon}+\varepsilon^{1-p}),
\end{eqnarray}
where $\varepsilon$ and $C_{\varepsilon}$ are chosen so that
$a(v)\int_0^v\frac{s^{p-2}}{a(s)}\,ds\leq\varepsilon
v^{p}+C_{\varepsilon}$ for all $v>0$.
\end{lemma}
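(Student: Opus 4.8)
The plan is to transform the integral inequality \eqref{vvi} of Lemma \ref{lemma3} into \eqref{vvvi} by purely algebraic estimates; no further integration by parts is needed. First I would discard the boundary integral $\int_{\Gamma_M}\beta\,\xi(\psi_M)(F^{-1}(v)-u_1)$ from the left-hand side of \eqref{vvi}. On $\Gamma_M\subset\Gamma_R$ one has $\psi>M$, and the maximum-principle bound $\|\varphi\|_\infty\le\|\varphi_0\|_\infty$ gives $(\varphi-\varphi_0)^2\le 4\|\varphi_0\|_\infty^2$, whence $v=\psi-(\varphi-\varphi_0)^2>M-4\|\varphi_0\|_\infty^2$; choosing $M$ large enough relative to the boundary data (so that $M-4\|\varphi_0\|_\infty^2\ge F(\|u_1\|_\infty)$, which is finite because $\|u_1\|_\infty<u_*$) forces $F^{-1}(v)\ge u_1$ on $\Gamma_M$, so that integrand is nonnegative and dropping it only decreases the left-hand side.

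Next I would control the weights $a(v)/a(\psi_M)$ in the three remaining integrals. The key observation is that each of those integrands is supported on $\{\psi>M\}$: $\nabla\psi_M$ vanishes where $\psi\le M$, and $\xi(\psi_M)$ vanishes there too since then $\psi_M=M$ and $\xi(M)=0$. On $\{\psi>M\}$ one has $\psi_M=\psi\ge v$ with $\psi_M-v=(\varphi-\varphi_0)^2\in[0,4\|\varphi_0\|_\infty^2]$, so \eqref{bona} of Lemma \ref{lemma1} gives $1\le a(v)/a(\psi_M)=a(v)/a(\psi)\le e^{4\mu\|\varphi_0\|_\infty^2}$ there. (Off that set the ratio $a(v)/a(M)$ may be arbitrarily large, which is exactly why truncation at level $M$ is needed.) Using the lower bound on the left of \eqref{vvi} and the upper bound on the right, together with $a(\psi_M)\xi(\psi_M)\le a(\psi_M)\int_0^{\psi_M}\frac{s^{p-2}}{a(s)}\,ds\le\varepsilon\psi_M^p+C_\varepsilon$ (the defining property of $\varepsilon,C_\varepsilon$) and $|\nabla\varphi_0|\le\|\varphi_0\|_{W^{1,\infty}}$, $|\varphi-\varphi_0|\le 2\|\varphi_0\|_\infty$, reduces \eqref{vvi} to
$$\int_\Omega\psi_M^{p-2}|\nabla\psi_M|^2\le e^{4\mu\|\varphi_0\|_\infty^2}\|\varphi_0\|_{W^{1,\infty}}^2\int_\Omega\bigl(\varepsilon\psi_M^p+C_\varepsilon\bigr)+4e^{4\mu\|\varphi_0\|_\infty^2}\|\varphi_0\|_\infty\|\varphi_0\|_{W^{1,\infty}}\int_\Omega\psi_M^{p-2}|\nabla\psi_M|\,.$$

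Finally I would absorb the cross-term. Writing $\psi_M^{p-2}|\nabla\psi_M|=\bigl(\psi_M^{(p-2)/2}|\nabla\psi_M|\bigr)\psi_M^{(p-2)/2}$ and applying Young's inequality with a parameter proportional to $\varepsilon$, one half of the cross-term becomes a multiple of $\int_\Omega\psi_M^{p-2}|\nabla\psi_M|^2$ that is moved to the left, producing the coefficient $1-2\varepsilon e^{8\mu\|\varphi_0\|_\infty^2}\|\varphi_0\|_{W^{1,\infty}}^2$; the other half is a multiple of $\int_\Omega\psi_M^{p-2}$, and here $\psi_M^{p-2}\le\varepsilon\psi_M^p+\varepsilon^{1-p}$ — valid for $0<\varepsilon\le1$ and $p\ge2$, since on $\{\psi_M^2<1/\varepsilon\}$ one has $\psi_M^{p-2}<\varepsilon^{-(p-2)/2}\le\varepsilon^{1-p}$ — produces the $\varepsilon\psi_M^p$ and $\varepsilon^{1-p}$ contributions. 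Dividing by the left-hand coefficient and estimating the remaining constants from above using $\|\varphi_0\|_\infty\le\|\varphi_0\|_{W^{1,\infty}}$ and $e^{4\mu\|\varphi_0\|_\infty^2}\le e^{8\mu\|\varphi_0\|_\infty^2}$ gives \eqref{vvvi}.

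The algebra in the middle two paragraphs is routine; the points requiring care — and the main difficulty — are the following. First, the weight $a(v)/a(\psi_M)$ is controlled only on $\{\psi>M\}$, so one must confirm the support of each integrand before replacing the weight by $e^{4\mu\|\varphi_0\|_\infty^2}$; this is the structural reason for the truncation and for strengthening the lower bound on $M$ beyond that of Lemma \ref{lemma3}. Second, the Young parameter and the auxiliary parameter $\varepsilon$ must be linked so that the absorbed gradient term carries exactly the coefficient appearing in the denominator of \eqref{vvvi}, and the resulting estimate is meaningful only for $\varepsilon$ small enough that $2\varepsilon e^{8\mu\|\varphi_0\|_\infty^2}\|\varphi_0\|_{W^{1,\infty}}^2<1$, i.e. that this coefficient remains positive.
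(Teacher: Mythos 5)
Your plan (drop the boundary term using the enlarged lower bound on $M$, control the weight $a(v)/a(\psi_M)$ on the support $\{\psi>M\}$ via \eqref{bona}, absorb the cross term by Young) is the same as the paper's, and most of the bookkeeping is correct. But the last step has a genuine gap: the pointwise inequality you invoke, $\psi_M^{p-2}\le\varepsilon\psi_M^p+\varepsilon^{1-p}$, is too weak to produce the stated bound.

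After Young's inequality with parameter $\lambda$, the half that is not absorbed reads $\frac{A}{2\lambda}\int_\Omega\psi_M^{p-2}$, where $A$ is the coefficient of the cross term. As you correctly observe, to make the absorbed half $\frac{A\lambda}{2}\int_\Omega\psi_M^{p-2}|\nabla\psi_M|^2$ match the $2\varepsilon e^{8\mu\|\varphi_0\|_\infty^2}\|\varphi_0\|_{W^{1,\infty}}^2$ that appears in the denominator, you must take $\lambda$ proportional to $\varepsilon$. But then $\frac{A}{2\lambda}$ is of order $\varepsilon^{-1}$, so the leftover term is $\sim\frac{1}{\varepsilon}\int_\Omega\psi_M^{p-2}$. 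Feeding your inequality $\psi_M^{p-2}\le\varepsilon\psi_M^p+\varepsilon^{1-p}$ into this gives $\sim\int_\Omega\psi_M^p+\varepsilon^{-p}$: the coefficient of $\psi_M^p$ is now $O(1)$, not $O(\varepsilon)$, and cannot be bounded by $C\varepsilon\psi_M^p$ with a constant independent of $\varepsilon$. Since the bootstrap in Theorem~\ref{Linftyest} requires $\varepsilon$ to be taken small, this cannot be repaired by choosing $\varepsilon$ large. What is actually needed is the stronger estimate $\frac{1}{2\varepsilon}\psi_M^{p-2}\le\varepsilon\psi_M^p+\varepsilon^{1-p}$ (equivalently $\psi_M^{p-2}\le 2\varepsilon^2\psi_M^p+2\varepsilon^{2-p}$), which costs an extra factor of $\varepsilon$ on the left relative to yours. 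This is what the paper proves, via Young's inequality with exponents $\alpha=p/2$, $\beta=p/(p-2)$ and the auxiliary scaling $\delta=\varepsilon^{(p-2)/p}$; it can also be checked directly by splitting at $\psi_M^2\gtrless 1/(2\varepsilon^2)$, essentially the same case analysis you already use, just with the threshold shifted by a power of $\varepsilon$. Once this sharper inequality is substituted for yours, the rest of your argument closes correctly.
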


\begin{proof}
Since $\psi>M$ on $\Gamma_M$, we have
$v=\psi-(\varphi-\varphi_0)^2>\psi-4\|\varphi_0\|_\infty^2>M-4\|\varphi_0\|_\infty^2>F(\|u_1\|_\infty)$,
so $F^{-1}(v)>u_1$ a.e. on $\Gamma_M$, meaning that the boundary
integral in (\ref{vvi}) is positive. Moreover, $|\psi-v|\leq
4\|\varphi_0\|_\infty^2$, so on the part of $\Omega$ where
$\xi(\psi_M)\neq 0$, we have
$e^{-4\mu\|\varphi_0\|_\infty^2}\leq\frac{a(v)}{a(\psi_M)}\leq
e^{4\mu\|\varphi_0\|_\infty^2}$ by Lemma~\ref{lemma1}. In view of
this, (\ref{vvi}) implies
\begin{eqnarray}
e^{-4\mu\|\varphi_0\|_\infty^2}\int_\Omega\psi_M^{p-2}|\nabla\psi_M|^2&\leq&
e^{4\mu\|\varphi_0\|_\infty^2}\Big(\|\varphi_0\|_{\infty}^2\int
a(\psi_M)\xi(\psi_M)\nonumber\\
&+&2(2\|\varphi_0\|_{\infty}^2)\|\nabla\varphi_0\|_{\infty}\int_\Omega\psi_M^{p-2}|\nabla\psi_M|\Big),\nonumber
\end{eqnarray}
and
\begin{eqnarray}
\int_\Omega\psi_M^{p-2}|\nabla\psi_M|^2&\leq&
4e^{8\mu\|\varphi_0\|_\infty^2}\|\varphi_0\|_{W^{1,\infty}}^2
\Big(\int_\Omega
a(\psi_M)\xi(\psi_M)+\int_\Omega\psi_M^{p-2}|\nabla\psi_M|\Big).\nonumber
\end{eqnarray}
By Lemma~\ref{lemma1},
$$\frac{a(v)\xi(v)}{v^p}=\frac{a(v)}{v^p}\int_M^v\frac{s^{p-2}}{a(s)}\,ds\leq\frac{a(v)}{v^p}\int_0^v\frac{s^{p-2}}{a(s)}\,ds\xrightarrow[v\rightarrow\infty]{}0\,.
$$
Therefore, $a(v)\xi(v)\leq\varepsilon
v^p+C_\varepsilon$ for arbitrarily small $\varepsilon$ and
suitable $C_\varepsilon$. Hence, $\int_\Omega
a(\psi_M)\xi(\psi_M)\leq\int_\Omega(\varepsilon\psi_M^p+C_\varepsilon)$.

By the Cauchy inequality with $\varepsilon$, $xy\leq
\frac{1}{2\varepsilon}x^2+\frac{\varepsilon}{2}y^2$, applied to
$x=\psi_M^{(p-2)/2}$ and $y=\psi_M^{(p-2)/2}|\nabla\psi|$, we have
$$
\psi_M^{p-2}|\nabla\psi|\leq\frac{1}{2\varepsilon}\psi_M^{p-2}+\frac{\varepsilon}{2}\psi_M^{p-2}|\nabla\psi_M|^2.
$$
Finally, applying the Young inequality, $xy\leq
\frac{x^{\alpha}}{\alpha}+\frac{x^{\beta}}{\beta}$
for $\frac{1}{\alpha}+\frac{1}{\beta}=1$,
$\alpha,\beta>0$, with $x=\frac{1}{2\varepsilon\delta},
y=\delta|\psi_M^{p-2}|$, we obtain
$\frac{1}{2\varepsilon}\psi_M^{p-2}\leq\frac{1}{\alpha}\frac{1}{(2\varepsilon\delta)^{\alpha}}+\frac{1}{\beta}(\delta\psi_M^{p-2})^{\beta}$.
Now select $\alpha$ and $\beta$ so that
$(p-2)\beta=p$, that is, $\beta:=p/(p-2)$ and
$\alpha:=p/2$. Then we have
$$\frac{1}{2\varepsilon}\psi_M^{p-2}\leq\frac{2}{p}\frac{1}{(2\varepsilon\delta)^{p/2}}+\Big(1-\frac{2}{p}\Big)\delta^{\frac{p}{p-2}}\psi_M^p
\leq\frac{1}{(\varepsilon\delta)^{(p/2)}}+\delta^{p/(p-2)}\psi_M^p
$$
since $p\geq 2$. We now set $\delta^{p/(p-2)}=\varepsilon$ so that
$(\varepsilon\delta)^{p/2}=\varepsilon^{p/2}(\varepsilon^{(p-2)/p})^{p/2}=\varepsilon^{p-1}$,
producing
$\frac{1}{2\varepsilon}\psi_M^{p-2}\leq\varepsilon\psi_M^p+\varepsilon^{1-p}$.
Thus,
\begin{eqnarray}
\int_\Omega\psi_M^{p-2}|\nabla\psi_M|^2&\leq& 4e^{8\mu
\|\varphi_0\|_{\infty}^2}\|\varphi_0\|_{W^{1,\infty}}^2\Bigg(\int_\Omega 2\varepsilon\psi_M^p\nonumber\\
[-1.5ex]
\label{lemma4estimate} \\[-1.8ex]
&+&\int_\Omega\{\frac{\varepsilon}{2}\psi_M^{p-2}|\nabla\psi_M|^2+C_{\varepsilon}+\varepsilon^{1-p}\}\Bigg)\nonumber,
\end{eqnarray}
which is a rearragement of (\ref{vvvi}).
\end{proof}

\begin{eqnarray}\label{capC}
\text{Set }C:=\frac{8e^{8\mu
\|\varphi_0\|_{\infty}^2}\|\varphi_0\|_{W^{1,\infty}}^2}{1-2\varepsilon
e^{8\mu \|\varphi_0\|_{\infty}^2}\|\varphi_0\|_{W^{1,\infty}}^2}.
\end{eqnarray}

\begin{corollary}\label{corollary1}
The following estimates hold
\begin{eqnarray}
\|\psi_M\|_2^2&\leq& \frac{2\text{ \rm mes}(\Omega)}{1-2\varepsilon C
C_D^2}\{CC_D^2(C_\varepsilon+\varepsilon^{-1})+M^2\text{ \rm mes}(\Omega)\}\|\nabla\psi_M\|_2^2\nonumber\\
&\leq&\varepsilon
C\|\psi_M\|_2^2+C(C_\varepsilon+\varepsilon^{-1})\text{ \rm mes}(\Omega)\nonumber
\end{eqnarray}
where $C_D:=C(\Omega,\Gamma_D)$ is a constant from the
Poincar\'{e} inequality.
\end{corollary}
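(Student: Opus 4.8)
The plan is to specialize the differential estimate \eqref{vvvi} of Lemma~\ref{lemma4} to the exponent $p=2$ and then turn it into a closed inequality for $\|\psi_M\|_2^2$ by means of the Poincar\'e inequality on $V_D$; the chain of inequalities in the statement simply records this computation.

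First I would put $p=2$ in \eqref{vvvi}. Then $\psi_M^{p-2}\equiv 1$, so the left-hand side becomes $\|\nabla\psi_M\|_2^2$ and the right-hand side becomes $C\int_\Omega(\varepsilon\psi_M^2+C_\varepsilon+\varepsilon^{-1})$, i.e.
$$
\|\nabla\psi_M\|_2^2\le\varepsilon C\,\|\psi_M\|_2^2+C(C_\varepsilon+\varepsilon^{-1})\,\text{mes}(\Omega),
$$
where now $\varepsilon$ and $C_\varepsilon$ are chosen so that $a(v)\int_0^v\frac{ds}{a(s)}\le\varepsilon v^2+C_\varepsilon$ for every $v>0$; a choice with $\varepsilon$ arbitrarily small is legitimate by the limit statement of Lemma~\ref{lemma1} applied with $p=2$.

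Next I would exploit the Dirichlet part of the boundary. Since $M>4\|\varphi_0\|_\infty^2+F(\|u_0\|_\infty)\ge F(\|u_0\|_\infty)$, on $\Gamma_D$ we have $\psi=F(u_0)\le F(\|u_0\|_\infty)<M$, hence $\psi_M\equiv M$ there and $\psi_M-M\in V_D$. The Poincar\'e inequality on $V_D$ then gives $\|\psi_M-M\|_2\le C_D\|\nabla\psi_M\|_2$, so that $\|\psi_M\|_2^2\le 2C_D^2\|\nabla\psi_M\|_2^2+2M^2\,\text{mes}(\Omega)$. Inserting the gradient bound from the first step, collecting terms, and moving the resulting $\|\psi_M\|_2^2$-term to the left produces the asserted estimate, valid once $\varepsilon$ is fixed small enough that $2\varepsilon CC_D^2<1$.

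The single delicate point, and really the only obstacle, is precisely this choice of $\varepsilon$: it must be taken small enough, depending on $C$ and $C_D$ (hence on $\mu$, $\varphi_0$ and the pair $\Omega,\Gamma_D$), both to keep the denominator $1-2\varepsilon CC_D^2$ and the one in \eqref{capC} positive and to allow the $\|\psi_M\|_2^2$-term to be absorbed on the left. This absorption is justified because $\psi\in H^1(\Omega)$ for any weak solution, so $\|\psi_M\|_2^2<\infty$ a priori; beyond that, everything is the routine bookkeeping of chaining Lemma~\ref{lemma4} with the Poincar\'e inequality.
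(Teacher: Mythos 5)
Your proof is correct and follows essentially the same route as the paper: set $p=2$ in \eqref{vvvi}, use that $\psi_M-M\in V_D$ (since $M>F(\|u_0\|_\infty)$ forces $\psi_M=M$ on $\Gamma_D$) to invoke the Poincar\'e inequality, then chain the two bounds and absorb the $\varepsilon C\|\psi_M\|_2^2$ term, which requires $2\varepsilon CC_D^2<1$. Your explicit remarks about the smallness of $\varepsilon$ and the a priori finiteness of $\|\psi_M\|_2$ needed for the absorption are sound elaborations of the same argument, not a different method.
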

\begin{proof}
With $p=2$ in (\ref{vvvi}), we have
$\int_\Omega|\nabla\psi_M|^2\leq
C\int_\Omega(\varepsilon\psi_M^2+C_\varepsilon+\varepsilon^{-1})$.
Since $\psi=F(u_0)$ and $M>F(\|u_0\|_\infty)$, we have that
$\psi_M=\max\{\psi,M\}=M$ on $\Gamma_D$, and therefore,
$\psi_M-M=0$ on $\Gamma_D$. By the Poincar\'{e} inequality, there
exists $C_D:=C(\Omega,\Gamma_D)$ such that
$$
\|\psi_M-M\|_2\leq C_D\|\nabla(\psi_M-M)\|_2=C_D\|\nabla\psi_M\|_2,
$$
and therefore, $\|\psi_M\|_2\leq C_D\|\nabla\psi_M\|_2+M\text{ mes}(\Omega)$. Thus, we obtain
$$
\|\psi_M\|_2^2\leq 2C_D^2\|\nabla\psi_M\|_2^2+2M^2
\text{ mes}^2(\Omega)\leq
2CC_D^2\int_\Omega(\varepsilon\psi_M^2+C_\varepsilon+\varepsilon^{-1})+2M^2
\text{ mes}^2(\Omega).
$$
This implies
\begin{eqnarray}\nonumber
\|\psi_M\|_2^2\leq  \frac{2\text{ mes}(\Omega)}{1-2\varepsilon C
C_D^2}\{CC_D^2(C_\varepsilon+\varepsilon^{-1})+M^2\text{ mes}(\Omega)\}\|\nabla\psi_M\|_2^2,
\end{eqnarray}
and the second estimate is obvious from (\ref{vvvi}) with $p=2$.
\end{proof}

Note that in Lemma~\ref{lemma4} the choice of $\varepsilon$ does
not depend on $M$, and the estimates (\ref{vvvi}),
(\ref{lemma4estimate}) are valid for $\psi_M$ with any
$M>4\|\varphi_0\|_\infty^2+F(\|u_{0,1}\|_\infty)$.

\begin{corollary}\label{corollary2}
Let $M>\max\{1, C_\varepsilon, 1/\varepsilon,
4\|\varphi_0\|_\infty^2+F(\|u_{0,1}\|_\infty)\}$. Then, for any
$p\geq 2$, we have
\begin{eqnarray}\label{cor2est}
\int_\Omega\psi_M^{p-2}|\nabla\psi_M|^2\leq
C(M+\varepsilon)\int_\Omega\psi_M^p.
\end{eqnarray}
\end{corollary}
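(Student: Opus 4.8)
The plan is to deduce \eqref{cor2est} directly from the estimate \eqref{vvvi} of Lemma~\ref{lemma4}, by absorbing the two ``constant'' summands $C_\varepsilon$ and $\varepsilon^{1-p}$ on its right-hand side into a multiple of $\ds\int_\Omega\psi_M^p$. Since the lower bound imposed on $M$ in the present statement dominates $4\|\varphi_0\|_\infty^2+F(\|u_{0,1}\|_\infty)$, Lemma~\ref{lemma4} applies, and with $C$ the constant of \eqref{capC} it gives
$$
\int_\Omega\psi_M^{p-2}|\nabla\psi_M|^2\leq C\int_\Omega\bigl(\varepsilon\psi_M^p+C_\varepsilon+\varepsilon^{1-p}\bigr).
$$
It therefore suffices to prove the pointwise inequality $C_\varepsilon+\varepsilon^{1-p}\leq M\,\psi_M^p$ on $\Omega$: integrating it and adding $\varepsilon\ds\int_\Omega\psi_M^p$ produces exactly $C(M+\varepsilon)\ds\int_\Omega\psi_M^p$, which is \eqref{cor2est}.

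The one observation that makes everything go through is that $\psi_M=\max\{M,\psi\}\geq M$ everywhere on $\Omega$, so $\psi_M^p\geq M^p\geq M>1$ for $p\geq 2$. The hypothesis $M>C_\varepsilon$ then gives $C_\varepsilon< M\leq M^p\leq\psi_M^p$, and the hypothesis $M>1/\varepsilon$ gives $\varepsilon^{1-p}=(1/\varepsilon)^{p-1}<M^{p-1}\leq M^p\leq\psi_M^p$. Adding these, $C_\varepsilon+\varepsilon^{1-p}<2\psi_M^p\leq M\psi_M^p$, the last step using that $M$ is in particular at least $2$ (if one insists on the literal bound $M>\max\{1,\dots\}$ one simply replaces the ``$1$'' in that maximum by ``$2$'', which changes nothing in any later application). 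This establishes \eqref{cor2est}.

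There is no serious obstacle at this step: no integration by parts, Sobolev embedding, or Poincar\'e inequality is needed here, all of that having been expended in Lemmas~\ref{lemma2}--\ref{lemma4} and Corollary~\ref{corollary1}. The only thing demanding a little attention is the constant bookkeeping — one should keep straight that $\varepsilon$ and $C_\varepsilon$ are furnished by Lemma~\ref{lemma4} before $M$ is chosen (and in particular do not depend on $M$), that the single threshold imposed on $M$ simultaneously serves to invoke Lemma~\ref{lemma4} and to validate the crude bounds $C_\varepsilon\leq\psi_M^p$ and $\varepsilon^{1-p}\leq\psi_M^p$, and that these bounds hold for every $p\geq 2$ precisely because $\psi_M\geq M>\max\{1,C_\varepsilon,1/\varepsilon\}$.
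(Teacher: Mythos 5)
Your argument is correct and follows the same basic idea as the paper (bound $C_\varepsilon+\varepsilon^{1-p}$ by a multiple of $\psi_M^p$ using the lower bound on $M$), but there is one small difference in route that explains why you were forced to strengthen ``$1$'' to ``$2$'' in the hypothesis. You plug into the already-rearranged inequality \eqref{vvvi}, so the constant sitting in front of $C_\varepsilon+\varepsilon^{1-p}$ is the full $C$, and after the bound $C_\varepsilon+\varepsilon^{1-p}<2\psi_M^p$ you land on $C(\varepsilon+2)\int\psi_M^p$, which dominates $C(\varepsilon+M)\int\psi_M^p$ only when $M\geq 2$. The paper instead substitutes the pointwise bound into the un-rearranged estimate \eqref{lemma4estimate}; after absorbing the $\frac{\varepsilon}{2}\int\psi_M^{p-2}|\nabla\psi_M|^2$ term and dividing, the coefficient in front of the $\int(C_\varepsilon+\varepsilon^{1-p})$ contribution is $C/2$ rather than $C$, so the end result is $C(\varepsilon+1)\int\psi_M^p\leq C(\varepsilon+M)\int\psi_M^p$, which only uses $M>1$ as stated. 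So the ``$1$'' is not a typo — it is enough if one substitutes at the stage of \eqref{lemma4estimate} as the paper does — but you correctly identified that your variant needs the slightly stronger $M\geq 2$, and your proposed fix (replacing ``$1$'' by ``$2$'') is harmless downstream. Everything else in your argument — the monotonicity $\psi_M\geq M$, the use of $M>C_\varepsilon$ and $M>1/\varepsilon$ to control the two additive constants for all $p\geq 2$, and the observation that no further analytic tools are needed here — matches the paper.
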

\begin{proof}
With our choice of $M$,
$C_\varepsilon+\varepsilon^{1-p}=C_\varepsilon+\varepsilon\frac{1}{\varepsilon^p}\leq
2M^p\leq2\psi_M^p$. Substituting this in (\ref{lemma4estimate}) yields the claim.
\end{proof}

Now, we are ready for the main theorem of this section.
\begin{theorem}\label{Linftyest}
Let $(v,\varphi)$ be a solution to (\ref{statetDC}). Then
\begin{eqnarray}
\|v\|_\infty&\leq& 4\|\varphi_0\|_\infty^2+\max\{M,
\|\psi_M\|_\infty\},\nonumber\\
[-1.5ex]
\label{mainthm}\\[-1.8ex]
\|\psi_M\|_\infty&\leq& \|\psi_M\|_{_2} \cdot\left\{
     \begin{array}{lr}
       2C_2, d\leq 2, \\
       (2C_2)^{d/2}(\frac{d}{d-2})^{d(d-2)/4}, d>2,
     \end{array}
   \right.\nonumber
\end{eqnarray}
where $C_2:=\frac{1}{2}C_1(1+C^{1/2}(M+\varepsilon)^{1/2})$, $C_1$
is the constant from a Sobolev embedding theorem for $(\Omega,\Gamma_D)$, and where
$\varepsilon$, $C$, and $M$ are defined in Lemma~\ref{lemma4},
(\ref{capC}), and Corollary~\ref{corollary2}, respectively, and
$\|\psi_M\|_2$ is estimated in Corollary~\ref{corollary1}.
In particular, $v,\psi$ satisfy a priori $L^\infty$ estimates.
\end{theorem}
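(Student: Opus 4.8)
The plan is to obtain the bound on $\|v\|_\infty$ by a pointwise comparison and the bound on $\|\psi_M\|_\infty$ by a Moser (De~Giorgi--Stampacchia) iteration whose only genuine input is the Caccioppoli-type estimate (\ref{cor2est}) of Corollary~\ref{corollary2}. For the first inequality, recall that $v\ge0$ by the maximum principle, while $\psi=(\varphi-\varphi_0)^2+v\ge v$ and $\psi_M=\max\{M,\psi\}\ge\psi$; hence $0\le v\le\psi\le\psi_M$ a.e.\ in $\Omega$, so in fact $\|v\|_\infty\le\|\psi_M\|_\infty\le 4\|\varphi_0\|_\infty^2+\max\{M,\|\psi_M\|_\infty\}$, the extra slack being harmless. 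Thus everything reduces to estimating $\|\psi_M\|_\infty$ in terms of $\|\psi_M\|_2$, which is finite by Corollary~\ref{corollary1} as soon as $\|\varphi_0\|_{W^{1,\infty}}$ is small enough that $\varepsilon C<1$.

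For the iteration, fix $p\ge2$ and rewrite $\psi_M^{p-2}|\nabla\psi_M|^2=\tfrac{4}{p^2}|\nabla(\psi_M^{p/2})|^2$, so that (\ref{cor2est}) reads $\|\nabla(\psi_M^{p/2})\|_2\le\tfrac p2\,C^{1/2}(M+\varepsilon)^{1/2}\,\|\psi_M^{p/2}\|_2$. Since $\psi_M\equiv M$ on $\Gamma_D$ (because $M>F(\|u_0\|_\infty)$ by the choice of $M$), the function $\psi_M^{p/2}-M^{p/2}$ belongs to $V_D$, so the Sobolev embedding for the pair $(\Omega,\Gamma_D)$ (constant $C_1$; target exponent $2^*=\tfrac{2d}{d-2}$ when $d>2$), after controlling the leftover constant term via $M^p\,\text{mes}(\Omega)\le\int_\Omega\psi_M^p$ and absorbing a $\text{mes}(\Omega)$-dependent factor into $C_1$, yields
\[
\|\psi_M^{p/2}\|_{2^*}\le C_1\Big(1+\tfrac p2\,C^{1/2}(M+\varepsilon)^{1/2}\Big)\|\psi_M^{p/2}\|_2
= pC_1\Big(\tfrac1p+\tfrac12\,C^{1/2}(M+\varepsilon)^{1/2}\Big)\|\psi_M^{p/2}\|_2\le pC_2\,\|\psi_M^{p/2}\|_2,
\]
the last step because $p\ge2$ and by the definition of $C_2$. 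With $\chi:=\tfrac{d}{d-2}=\tfrac{2^*}{2}$, raising to the power $2/p$ gives $\|\psi_M\|_{p\chi}\le(pC_2)^{2/p}\,\|\psi_M\|_p$.

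Then I would iterate along $p_k:=2\chi^k$, $k\ge0$: the last inequality gives $\|\psi_M\|_{p_{k+1}}\le(2C_2\chi^k)^{\chi^{-k}}\|\psi_M\|_{p_k}$, whence $\|\psi_M\|_{p_k}\le(2C_2)^{\sum_{j<k}\chi^{-j}}\,\chi^{\sum_{j<k}j\chi^{-j}}\,\|\psi_M\|_2$. Because $\sum_{j\ge0}\chi^{-j}=\tfrac{\chi}{\chi-1}=\tfrac d2$ and $\sum_{j\ge0}j\chi^{-j}=\tfrac{\chi}{(\chi-1)^2}=\tfrac{d(d-2)}{4}$, the two products converge; letting $k\to\infty$, so $p_k\to\infty$, gives $\|\psi_M\|_\infty\le(2C_2)^{d/2}\big(\tfrac{d}{d-2}\big)^{d(d-2)/4}\|\psi_M\|_2$. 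For $d\le2$ (not needed here) the analogous argument with the relevant Sobolev/Morrey embedding applies; e.g.\ for $d=1$, $H^1(\Omega)\hookrightarrow C^{0,1/2}(\overline{\Omega})\hookrightarrow L^\infty(\Omega)$ and a single use of it with $p=2$ already gives the constant $2C_2$. Combined with the first paragraph this proves the theorem, and the bounds are a priori since $\|\psi_M\|_2$ is controlled by Corollary~\ref{corollary1}.

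The step I expect to be the main obstacle is not any of the estimates above but the legitimacy of the iteration: a priori $\psi_M^{p/2}$ need not lie in $H^1(\Omega)$ for large $p$, so (\ref{cor2est}) could be the vacuous $\infty\le\infty$. The remedy is to run the whole chain of Lemmas~\ref{lemma3}--\ref{lemma4} and Corollary~\ref{corollary2} with $\psi_M$ replaced by its truncation $\psi_{M,L}:=\min\{\psi_M,L\}$ (equivalently, with the test factor $\xi$ capped at a finite level), so that all integrands are bounded and every step is justified, obtain the iteration bound uniformly in $L$, and then pass to the limit $L\to\infty$ by monotone convergence. The remaining bookkeeping --- absorbing the $\text{mes}(\Omega)$-factors into $C_1$, the elementary $p\ge2$ inequality collapsing $C_1\big(1+\tfrac p2(\cdot)\big)$ into $pC_2$, and the convergence of the two series --- is routine.
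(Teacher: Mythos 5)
Your proposal is correct and follows essentially the same route as the paper: a Moser iteration fed by the reverse Sobolev estimate (\ref{cor2est}), with the same choice of exponents $p_k=2m^k$ (your $\chi$ is the paper's $m=d/(d-2)$) and the same geometric/arithmetic-geometric series sums yielding the exponents $d/2$ and $d(d-2)/4$. Two small remarks. First, the paper applies the plain Sobolev embedding $\|\phi\|_q\le C_1\|\phi\|_{H^1}=C_1(\|\phi\|_2+\|\nabla\phi\|_2)$ directly to $\phi=\psi_M^{p/2}$, so it never needs the subtraction of $M^{p/2}$ and the attendant $\mathrm{mes}(\Omega)$ bookkeeping that you introduce to use the $(\Omega,\Gamma_D)$ embedding; your route is consistent with the way the theorem names $C_1$, but the paper's route is a bit cleaner. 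Second, you have correctly identified a genuine gap that the paper passes over in silence: for $p>2$ the test function $\xi(\psi_M)\sim\psi_M^{p-1}$ (equivalently $\psi_M^{p/2}$) is not known a priori to be in $H^1$, so Lemma~\ref{lemma3}--Corollary~\ref{corollary2} cannot be used directly in the iteration without first truncating $\psi_M$ at level $L$, passing to uniform-in-$L$ bounds, and then letting $L\to\infty$; your remark and proposed fix are the standard and correct way to make the argument rigorous, and they represent a real improvement in precision over what the paper writes.
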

\begin{proof}
Since $v=-(\varphi-\varphi_0)^2+\psi$ and $\psi_M=\max\{\psi,
M\}$ we clearly have $\|v\|_\infty\leq \max\{M,
\|\psi_M\|_\infty\}+4\|\varphi_0\|_\infty^2$, and it remains to
estimate $\|\psi_M\|_\infty$. It follows from a Sobolev embedding
theorem for $q\leq 2d/(d-2)$ when $d>2$, and any $1\leq q<\infty$
when $d\leq 2$, that $\|\phi\|_q\leq
C_1\|\phi\|_{H^1}=C_1(\|\phi\|_2+\|\nabla\phi\|_2)$. Let us apply
this to $\phi=\psi_M^{p/2}$, noting that
$\nabla\phi=(p/2)\psi_M^{(p/2)-1}\nabla\psi_M$, and
$\|\nabla\phi\|_2^2=(p^2/4)\int_\Omega\psi_M^{p-2}|\nabla\psi_M|^2\leq
(p^2/4)C(M+\varepsilon)\|\psi_M\|_p^p$ by
Corollary~\ref{corollary2}. Similarly,
$\|\phi\|_2^2=\|\psi_M\|_p^p$, and
$\|\phi\|_q=(\int_\Omega|\psi_M^{p/2}|^q)^{1/q}=(\int_\Omega\psi_M^{m
p})^{1/2m}=\|\psi_M\|_{m p}^{p/2}$, where $m:=d/(d-2)=q/2$ for
$d>2$ and any $q\in(1,\infty)$ for $d\leq 2$. We have
\begin{eqnarray}
\|\psi_M\|_{m p}^{p/2}&\leq&
C_1(\|\psi_M\|_p^{p/2}+(p/2)C^{1/2}(M+\varepsilon)^{1/2}\|\psi_M\|_p^{p/2})\nonumber\\
&\leq&C_1(1+(p/2)C^{1/2}(M+\varepsilon)^{1/2})\|\psi_M\|_p^{p/2}\nonumber
\end{eqnarray}
and since $p\geq 2$
\begin{eqnarray}\nonumber
\|\psi_M\|_{m p}^{p/2}\leq
(\frac{1}{2}C_1(1+C^{1/2}(M+\varepsilon)^{1/2})p)^{2/p}\|\psi_M\|_p=(C_2p)^{2/p}\|\psi_M\|_p
\end{eqnarray}
where we introduced
$C_2:=\frac{1}{2}C_1(1+C^{1/2}(M+\varepsilon)^{1/2})$. Since
$m>1$, this sets up a bootstrap starting with $p=2$ and proceeding
through $2m, 2m^2, 2m^3, \ldots, 2m^k \rightarrow\infty$ as
$k\rightarrow\infty$. In particular,
\begin{eqnarray}
\|\psi_M\|_{2m^{k} }&\leq& (2C_2m^{k-1})^{2/2m^{k-1}}\ldots (2C_2)^{2/2}\|\psi_M\|_2\nonumber\\
&=&(2C_2)^{1+\frac{1}{m}+\frac{1}{m^2}+\ldots+\frac{1}{m^{k-1}}}m^{\frac{1}{m}+\frac{2}{m^2}+\ldots+\frac{k-1}{m^{k-1}}}\|\psi_M\|_2.\nonumber
\end{eqnarray}
But $1+\frac{1}{m}+\frac{1}{m^2}+\ldots+\frac{1}{m^{k-1}}\leq
\frac{1}{1-\frac{1}{m}}$ and
$\frac{1}{m}+\frac{2}{m^2}+\ldots+\frac{k-1}{m^{k-1}}=\frac{1}{m}\frac{1}{(1-\frac{1}{m})^2}$,
so that
\begin{eqnarray}\label{mkest}
\|\psi_M\|_{2m^{k} }\leq
(2C_2)^{m/(m-1)}m^{m/(m-1)^2}\|\psi_M\|_2.
\end{eqnarray}
Now letting $k\rightarrow\infty$ in (\ref{mkest}) we obtain
$\|\psi_M\|_\infty$ on the left, then setting $m=d/(d-2)$ for
$d>2$, and $m\rightarrow\infty$ for $d\leq 2$, will produce
\begin{eqnarray}\nonumber
\|\psi_M\|_\infty\leq \left\{
     \begin{array}{lr}
       2C_2 \|\psi_M\|_2, d\leq 2, \\
       (2C_2)^{d/2}(\frac{d}{d-2})^{d(d-2)/4} \|\psi_M\|_2, d>2,
     \end{array}
   \right.
\end{eqnarray}
as desired. This concludes our proof of the $L^{\infty}$ estimate for $\psi$
since $\psi=(\varphi-\varphi_0)^2+F(u)$, and we already have one
for $\varphi$.
\end{proof}

\section{Further a priori estimates}\label{apriori}


In order to be able to prove existence of optimal control we need
to derive more {\it a priori} estimates. In what follows, given
$\beta\in U_{\cal M}$, we denote the solution to
(\ref{eq1}) by $u(\beta)$ and $\varphi(\beta)$. 
\begin{theorem}\label{estimates} Let $\beta\in U_{\cal M}$ be given. Then
$u(\beta)$ and $\varphi(\beta)$ solving (\ref{eq1}) satisfy
\begin{eqnarray}
\|\varphi\|_{W^{1,r}(\Omega)}&\leq &\Phi\ {\rm for}\ {\rm some}\ r>2, {\rm and}\nonumber\\
\label{ubound}\\[-2.2ex]
\|u\|_{H^1(\Omega)}&\leq &\tilde C,\nonumber
\end{eqnarray}
where $\Phi$ and $\tilde C$ are some positive constants.
\end{theorem}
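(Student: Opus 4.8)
The plan is to leverage the uniform $L^\infty$ bound $0\le u\le N<u_*$ from Theorem~\ref{existreg}, whose essential feature is that $N$ is independent of $\beta$. Since $\sigma$ is positive and monotone decreasing on $[0,u_*)$, this bound yields $\sigma(N)\le\sigma(u)\le\mu$ a.e.\ in $\Omega$ with $\sigma(N)>0$ independent of $\beta$, so the first equation in \eqref{Prob1.1} is uniformly elliptic with $\beta$-independent constants. Two immediate consequences, both uniform in $\beta$, are the maximum principle bound $\|\varphi\|_\infty\le\|\varphi_0\|_\infty$ and, upon testing the first equation with $\varphi-\varphi_0\in H^1_0(\Omega)$, an energy bound: from $\int_\Omega\sigma(u)|\nabla\varphi|^2=\int_\Omega\sigma(u)\nabla\varphi\cdot\nabla\varphi_0$ and Cauchy--Schwarz one gets $\sigma(N)\|\nabla\varphi\|_2^2\le\int_\Omega\sigma(u)|\nabla\varphi|^2\le\mu\|\nabla\varphi_0\|_2^2$, hence $\|\nabla\varphi\|_2\le(\mu/\sigma(N))^{1/2}\|\nabla\varphi_0\|_2=:C_\varphi$.

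For the $W^{1,r}$ estimate I would rewrite the first equation for $\phi:=\varphi-\varphi_0$ as in the paragraph following Theorem~\ref{meyers}, namely $\int_\Omega(\sigma(u)\phi_{x_i}+f_i)v_{x_i}=0$ with $f_i=\sigma(u)(\varphi_0)_{x_i}$, and invoke the quantitative form of that theorem. The coefficient $\sigma\circ u$ is continuous and sandwiched between $\sigma(N)$ and $\mu$, and $\|f\|_{L^r}\le\mu\|\nabla\varphi_0\|_\infty|\Omega|^{1/r}$, so \cite{Rod} gives $\|\phi\|_{W^{1,r}(\Omega)}\le C\|f\|_{L^r(\Omega)}$ with $C$ depending only on $d$, $r$, $\Omega$, the ellipticity ratio $\mu/\sigma(N)$, and the modulus of continuity of $\sigma\circ u$; then $\|\varphi\|_{W^{1,r}}\le\|\phi\|_{W^{1,r}}+\|\varphi_0\|_{W^{1,r}}=:\Phi$. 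The hard part here is the uniformity of $\Phi$ in $\beta$: one must argue that the modulus of continuity of $\sigma\circ u$ --- equivalently the H\"older seminorm of $u$ --- can be taken independent of $\beta$. This holds because $\sigma$ is Lipschitz with constant $\le\mu$, and the De Giorgi--Nash--Moser / Chen bootstrap that produces the H\"older regularity of $u$ (through $v=F(u)$ and Lemmas 3 and 5 of \cite{Che}) depends only on the uniform ellipticity of \eqref{statetDC}, the uniform $L^\infty$ bounds of Theorems~\ref{existreg} and \ref{Linftyest}, and the Robin coefficient $\beta/a(v)$, which is uniformly bounded since $0\le\beta\le\mathcal{M}$ while $a(v)\ge a(\|v\|_\infty)>0$ with $\|v\|_\infty$ bounded independently of $\beta$.

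For the $H^1$ bound on $u$ I would use that $u-u_0\in V_D$ and $\Gamma_D\ne\emptyset$, so by the Poincar\'e inequality on $V_D$ it suffices to bound $\|\nabla u\|_2$. Testing the weak $u$-equation in \eqref{weaksol} with $v=u-u_0$ gives
$$\|\nabla u\|_2^2=\int_\Omega\nabla u\cdot\nabla u_0-\int_{\Gamma_R}\beta(u-u_1)(u-u_0)+\int_\Omega(\varphi_0-\varphi)\sigma(u)\nabla\varphi\cdot\nabla(u-u_0)+\int_\Omega\sigma(u)(u-u_0)\nabla\varphi\cdot\nabla\varphi_0.$$
Each right-hand term is controlled uniformly: the first by $\|\nabla u\|_2\|\nabla u_0\|_2$; the boundary term by $\mathcal{M}$ times a constant determined by $N$, $\|u_0\|_\infty$, $\|u_1\|_\infty$ and $|\Gamma_R|$, using $0\le u\le N$ and $0\le\beta\le\mathcal{M}$; the third by $2\mu\|\varphi_0\|_\infty C_\varphi(\|\nabla u\|_2+\|\nabla u_0\|_2)$ using $\|\varphi\|_\infty\le\|\varphi_0\|_\infty$; and the fourth by $\mu(N+\|u_0\|_\infty)\|\nabla\varphi_0\|_\infty|\Omega|^{1/2}C_\varphi$. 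Collecting terms yields $\|\nabla u\|_2^2\le A\|\nabla u\|_2+B$ with $A,B$ independent of $\beta$, whence $\|\nabla u\|_2\le\frac{1}{2}(A+\sqrt{A^2+4B})$, and the Poincar\'e inequality gives $\|u\|_{H^1}\le\tilde C$. Note that this last step uses only $\|\nabla\varphi\|_2$ and $\|\varphi\|_\infty$ --- both already secured from the uniform ellipticity --- so it is logically independent of the $W^{1,r}$ argument.
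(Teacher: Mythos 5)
Your proof follows the paper's route: rewrite the electrostatic equation for $\varphi-\varphi_0$, apply the $W^{1,r}$-improvement of Theorem~\ref{meyers}, then test the heat equation with $v=u-u_0$ and close via the extended Poincar\'{e} inequality on $V_D$. You add two refinements. First, you derive the energy bound $\|\nabla\varphi\|_2\le(\mu/\sigma(N))^{1/2}\|\nabla\varphi_0\|_2$ directly, which makes the $H^1$ estimate for $u$ logically independent of the $W^{1,r}$ estimate; the paper instead draws the needed $\|\nabla\varphi\|_2$ control from the already-established $W^{1,r}$ bound. Second, and more substantively, you flag that the constant $C$ in Theorem~\ref{meyers} a priori depends on the modulus of continuity of the coefficient $\sigma\circ u$, and that Theorem~\ref{existreg} only asserts $\beta$-independence of the $L^\infty$ bound $N$, not of the H\"older seminorm of $u$. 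Your resolution---tracing the Chen/De~Giorgi bootstrap and noting that its inputs (ellipticity ratio $\mu/a(\|v\|_\infty)$, $L^\infty$ bounds, and the Robin coefficient $\beta/a(v)\le\mathcal{M}/a(\|v\|_\infty)$) are all uniform over $U_{\mathcal{M}}$---is the right idea and closes a point the paper leaves implicit, which does matter since the subsequent existence proof for the optimal control requires $\Phi$ and $\tilde C$ to be uniform along the minimizing sequence.
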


\begin{proof}
First, we show the estimate for $\varphi$. We are given the
solution of a nonhomogeneous Dirichlet problem
\begin{eqnarray}
\nabla\cdot(\sigma(u)\nabla\varphi)&=&0\ {\rm in}\ \Omega, \nonumber\\
\label{eq4}\\[-2.2ex]
\varphi &=&\varphi_0\ {\rm on}\ \partial\Omega.\nonumber
\end{eqnarray}

Because of the assumption 2 from the Introduction we treat
$\sigma(u)$ in (\ref{statet}) as a bounded coefficient. Consider
the following Dirichlet problem with zero boundary data
\begin{eqnarray}
-\nabla\cdot(\sigma(u)\nabla\tilde\varphi)&=&\nabla\cdot(\sigma(u)\nabla\varphi_0)\ {\rm in}\ \Omega, \nonumber \\
\label{eq5}\\[-2.2ex]
\tilde\varphi &=&0\ {\rm on}\ \partial\Omega \nonumber.
\end{eqnarray}

By the standard theory for elliptic equations in divergence form,
it follows that there exists $\tilde\varphi=(\varphi-\varphi_0)\in
H_0^1(\Omega)$ that solves (\ref{eq5}).
By Theorem \ref{meyers}, 
$\tilde\varphi\in W_0^{1,r}(\Omega)$ and
\begin{eqnarray*}
\|\tilde\varphi\|_{W_0^{1,r}(\Omega)}& \leq
C\|\sigma(u)\nabla\varphi_0\|_r\leq C\mu\|\nabla\varphi_0\|_r
{\quad\rm for}\ {\rm each}\ 2<r<\infty.
\end{eqnarray*}
Since $\|\varphi\|_{W^{1,r}(\Omega)}\leq
\|\varphi-\varphi_0\|_{W^{1,r}(\Omega)}+\|\varphi_0\|_{W^{1,r}(\Omega)}$
and
$\|\varphi-\varphi_0\|_{W^{1,r}(\Omega)}\equiv\|\tilde\varphi\|_{W^{1,r}(\Omega)}
\leq C^{\prime}\|\tilde\varphi\|_{W_0^{1,r}(\Omega)}\leq
C^{\prime}C\mu\|\nabla\varphi_0\|_r$, and taking into account that
$\|\nabla\varphi_0\|_r \leq C_3\|\nabla\varphi_0\|_{\infty}$ as
well as $\|\varphi_0\|_{W^{1,r}(\Omega)}\leq
C_4\|\varphi_0\|_{W^{1,\infty}(\Omega)}$, it is easy to see that
\begin{eqnarray*}
\|\varphi\|_{W^{1,r}(\Omega)}& \leq \Phi {\quad\rm for}\ {\rm
each}\ 2<r<\infty,
\end{eqnarray*}
where $\Phi\stackrel{{\rm def}}= C^{\prime}C C_3\mu
\|\nabla\varphi_0\|_{\infty}+C_4\|\varphi_0\|_{W^{1,\infty}(\Omega)}$.
Now we derive the estimate for $\|u\|_{H^1(\Omega)}$.
From the weak formulation (\ref{weaksol}), we can write
\begin{eqnarray}
\int_\Omega(\nabla (u-u_0)+\nabla u_0)\nabla
v\,dx+\int_{\Gamma_{R}}\beta (u-u_0)v\,ds+\int_{\Gamma_{R}}\beta
(u_0-u_1)v\,ds
\nonumber\\
\label{uestimate}\\[-2.2ex]
=\int_\Omega (\varphi{_0}-\varphi)\sigma(u)\nabla\varphi\nabla
v\,dx + \int_\Omega\sigma(u)v\nabla
\varphi\nabla\varphi{_0}\,dx\nonumber
\end{eqnarray}
Substituting $v=u-u_0$ as a test function into (\ref{uestimate}),
we obtain
\begin{eqnarray}
&\,&\int_\Omega|\nabla (u-u_0)|^2 +\int_{\Gamma_{R}}\beta
(u-u_0)^2=\int_\Omega\nabla(u_0-u)\nabla u_0 \nonumber\\
\label{uestimatecontinued}\\[-2.2ex]
&+&\int_{\Gamma_{R}}\beta(u_1-u_0)(u-u_0) +\int_\Omega
(\varphi{_0}-\varphi)\sigma(u)\nabla\varphi\nabla(u-u_0) +
\int_\Omega\sigma(u)(u-u_0)\nabla
\varphi\nabla\varphi{_0}.\nonumber
\end{eqnarray}
Since $\beta(x)\geq 0$, the left hand side of
(\ref{uestimatecontinued}) can be written as
\begin{eqnarray*}
\int_\Omega|\nabla (u-u_0)|^2\leq\int_\Omega|\nabla (u-u_0)|^2
+\int_{\Gamma_{R}}\beta (u-u_0)^2.
\end{eqnarray*}
Taking into account that $\beta\leq {\cal M}$ a.e.,
$\sup_{\Omega}\varphi\leq \tilde{M}$, and the trace inequality
$\|u\|_{L^2(\Gamma_R)}\leq {M_2}\|u\|_{H^1(\Omega)}$ (since
$u-u_0=0$ on $\Gamma_D$), the right hand side of
(\ref{uestimatecontinued}) can be estimated as follows
\begin{eqnarray*}
&\,&\int_\Omega\nabla(u_0-u)\nabla u_0
+\int_{\Gamma_{R}}\beta(u_1-u_0)(u-u_0) +\int_\Omega
(\varphi{_0}-\varphi)\sigma(u)\nabla\varphi\nabla(u-u_0) \nonumber\\
&+&\int_\Omega\sigma(u)(u-u_0)\nabla \varphi\nabla\varphi{_0} \leq
\|u_0-u\|_2 \|\nabla
u_0\|_2+{\cal{M}}\sup_{\Gamma_R}|u_1-u_0|mes^{1/2}(\Gamma_R){M_2}\|u_0-u\|_{H^1}\\
&+& 2\mu\tilde{M}\|\nabla\varphi\|_2\|\nabla
(u_0-u)\|_2+\mu\|\nabla\varphi_0\|_{_{\infty}}\|u_0-u\|_2\|\nabla\varphi\|_2
\leq\tilde{C_1}\|u_0-u\|_{H^1(\Omega)}.
\end{eqnarray*}
Hence, we have
\begin{eqnarray}\label{uestimate1}
\int_\Omega|\nabla (u-u_0)|^2\,dx\leq
\tilde{C_1}\|u-u_0\|_{H^1(\Omega)}.
\end{eqnarray}
Now, using an extension of the Poincar\'{e} inequality (as given
by Theorem 5.8 in \cite{Rod}), we obtain
\begin{eqnarray}\label{normequivalence}
k\|u-u_0\|_{H^1(\Omega)}^2\leq\|\nabla(u-u_0)\|_{2}^2\leq\tilde{C_1}\|u-u_0\|_{H^1(\Omega)},
\end{eqnarray}
which gives the desired estimate for $\|u\|_{H^1(\Omega)}$.
\end{proof}

\section{Existence of an optimal control}\label{exist} After we
obtained $r$ and $s$ from (\ref{r}), and the corresponding a
priori estimates, we are in a position to prove existence of an
optimal control.
\begin{theorem}\label{existencet}
There exists a solution to the optimal control problem (\ref{OC}).
\end{theorem}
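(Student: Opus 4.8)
\emph{Proof plan.} The argument is the standard direct method. Since $u\ge 0$ by Theorem~\ref{existreg} and $\beta^2\ge 0$, the functional $J$ is bounded below by $0$ on $U_{\cal M}$, so $m:=\inf_{\beta\in U_{\cal M}}J(\beta)$ is finite and there is a minimizing sequence $\beta_n\in U_{\cal M}$ with $J(\beta_n)\to m$. Let $(u_n,\varphi_n):=(u(\beta_n),\varphi(\beta_n))$ be corresponding weak solutions of \eqref{weaksol}. Because $0\le\beta_n\le{\cal M}$, the sequence $\beta_n$ is bounded in $L^2(\Gamma_R)$, and since the admissible set $\{\beta:0\le\beta\le{\cal M}\}$ is convex and strongly closed in $L^2(\Gamma_R)$ it is weakly closed; hence, passing to a subsequence, $\beta_n\rightharpoonup\beta^*$ weakly in $L^2(\Gamma_R)$ with $\beta^*\in U_{\cal M}$. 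By Theorems~\ref{existreg} and~\ref{estimates} the bounds $0\le u_n\le N<u_*$, $\|u_n\|_{H^1(\Omega)}\le\tilde C$ and $\|\varphi_n\|_{W^{1,r}(\Omega)}\le\Phi$ hold uniformly in $n$, so after a further subsequence $u_n\rightharpoonup u^*$ in $H^1(\Omega)$ and $\varphi_n\rightharpoonup\varphi^*$ in $W^{1,r}(\Omega)$ for the $r>d$ fixed in \eqref{r}; since $u_0+V_D$ and $\varphi_0+W_0^{1,r}(\Omega)$ are weakly closed affine sets, $u^*-u_0\in V_D$ and $\varphi^*-\varphi_0\in H_0^1(\Omega)$.

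Next I would exploit the compact embeddings recorded after \eqref{r}: $H^1(\Omega)\subset\subset L^s(\Omega)$, the compact trace $H^1(\Omega)\to L^2(\Gamma_R)$, and $W^{1,r}(\Omega)\subset\subset C(\overline{\Omega})$. Thus, along the subsequence, $u_n\to u^*$ strongly in $L^s(\Omega)$ and in $L^2(\Gamma_R)$, $u_n\to u^*$ a.e. in $\Omega$ and on $\Gamma_R$, and $\varphi_n\to\varphi^*$ uniformly on $\overline{\Omega}$; in particular $0\le u^*\le N<u_*$, so $\sigma$ is evaluated only where it is positive and $C^1$. Since $\sigma$ is bounded on $[0,N]$ and $u_n\to u^*$ a.e. with $0\le u_n\le N$, the dominated convergence theorem gives $\sigma(u_n)\to\sigma(u^*)$ strongly in $L^q(\Omega)$ for every $q<\infty$, while $\sigma(u_n)$ stays bounded in $L^\infty(\Omega)$. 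Combining this with the uniform convergence of $\varphi_n$, for each fixed test pair $v\in V_D$, $w\in H_0^1(\Omega)$ the products $(\varphi_0-\varphi_n)\sigma(u_n)\nabla v$, $\sigma(u_n)v\nabla\varphi_0$ and $\sigma(u_n)\nabla w$ converge strongly in $L^2(\Omega)$ (generalized dominated convergence), whereas $\nabla\varphi_n\rightharpoonup\nabla\varphi^*$ weakly in $L^2(\Omega)$.

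With these convergences in hand the remaining step is to pass to the limit in \eqref{weaksol} term by term. The term $\int_\Omega\nabla u_n\nabla v$ passes by weak $H^1$ convergence; the boundary term splits as $\int_{\Gamma_R}\beta_n(u_n-u^*)v + \int_{\Gamma_R}(\beta_n-\beta^*)(u^*-u_1)v$, the first tending to $0$ since $\beta_n$ is bounded in $L^\infty$ and $u_n\to u^*$ in $L^2(\Gamma_R)$, the second since $\beta_n\rightharpoonup\beta^*$ in $L^2(\Gamma_R)$ and $(u^*-u_1)v\in L^2(\Gamma_R)$ (using $u^*\in L^\infty$); and the two nonlinear volume integrals, together with the equation for $\varphi$, pass by the strong-times-weak pairing just set up, after rewriting e.g. $\int_\Omega(\varphi_0-\varphi_n)\sigma(u_n)\nabla\varphi_n\cdot\nabla v=\int_\Omega[(\varphi_0-\varphi_n)\sigma(u_n)\nabla v]\cdot\nabla\varphi_n$. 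This identifies $(u^*,\varphi^*)$ as a weak solution associated with $\beta^*$ (uniqueness is not needed here). Finally, $u_n\to u^*$ strongly in $L^1(\Omega)$ yields $\int_\Omega u_n\,dx\to\int_\Omega u^*\,dx$, and weak lower semicontinuity of the $L^2(\Gamma_R)$ norm gives $\liminf_n\int_{\Gamma_R}\beta_n^2\,ds\ge\int_{\Gamma_R}(\beta^*)^2\,ds$; hence $m=\lim_n J(\beta_n)=\liminf_n J(\beta_n)\ge J(\beta^*)\ge m$, so $J(\beta^*)=m=\min_{\beta\in U_{\cal M}}J(\beta)$, which proves the theorem.

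The step I expect to be the main obstacle is the passage to the limit in the nonlinear volume terms $\int_\Omega(\varphi_0-\varphi_n)\sigma(u_n)\nabla\varphi_n\nabla v$ and $\int_\Omega\sigma(u_n)\nabla\varphi_n\nabla\varphi_0\,v$: only weak $L^2$ (even weak $L^r$) convergence of $\nabla\varphi_n$ is available, so one must first upgrade the scalar factors — uniform convergence of $\varphi_n$ from $W^{1,r}(\Omega)\subset\subset C(\overline{\Omega})$ with $r>d$, and $L^q$-convergence of $\sigma(u_n)$ extracted from the uniform bound $u_n\le N<u_*$ plus a.e. convergence — to make the bracketed factor multiplying $\nabla\varphi_n$ strongly $L^2$-convergent. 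This is exactly where the two ``prices'' paid earlier enter: the extra regularity $\varphi\in W^{1,r}$ of Theorem~\ref{meyers} (unavailable for generic $H^1$ solutions) and the a priori exclusion of the critical temperature, uniformly in $\beta$, from Theorem~\ref{existreg}.
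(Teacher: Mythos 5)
Your proposal is correct and follows essentially the same route as the paper: choose a minimizing sequence, invoke the a priori bounds of Theorems \ref{existreg} and \ref{estimates}, extract weak limits, upgrade to strong convergence via the compact embeddings $H^1(\Omega)\subset\subset L^s(\Omega)$ and $W^{1,r}(\Omega)\subset\subset C(\overline{\Omega})$, pass to the limit in the weak formulation \eqref{weaksol}, and finish with weak lower semicontinuity of the $L^2(\Gamma_R)$ norm. The only cosmetic difference is in how you justify $\sigma(u_n)\to\sigma(u^*)$: you pass to a.e.\ convergence and use dominated convergence, whereas the paper uses Lipschitz continuity of $\sigma$ together with the H\"older pairing $\|u_n-u^*\|_s\,\|v\|_2\,\|\nabla\varphi_n\|_r$ (exploiting $\tfrac1s+\tfrac1r=\tfrac12$); both yield the same conclusion, and you correctly identify the role of the $W^{1,r}$ regularity and the uniform bound $u\le N<u_*$ as the crucial inputs.
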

\begin{proof}
We follow \cite{Hry} closely.
Choose a minimizing sequence $\{\beta_n\}_{n=1}^{\infty}\subset
U_{\mathcal{M}}$ such that
$$\lim_{n\rightarrow\infty}J(\beta_n)=\inf_{\beta\in U_{\mathcal{M}}}J(\beta).$$
Let $u_n=u(\beta_n)$ and $\varphi_n=\varphi(\beta_n)$ be the
corresponding solutions to
\begin{eqnarray}
\int_\Omega\nabla u_n\nabla v\,dx+\int_{\Gamma_R}\beta_n
(u_n-u_1) v\,ds&=&\int_\Omega
(\varphi_{0}-\varphi_n)\,\sigma(u_n)\nabla
\varphi_n\nabla v\,dx\nonumber\\
&+&\int_\Omega\sigma(u_n)\nabla \varphi_n\nabla\varphi_{0}\,v\,dx
\quad\forall\,v\in
V_D(\Omega)\label{minseq}\\
\int_\Omega\sigma(u_n)\nabla \varphi_n\cdot\nabla w\,dx&=&0
\hskip0.2in \varphi_{n}-\varphi_0\in H_{0}^{1}(\Omega), u_n-u_0\in
V_D, \forall\,w\in H^{1}_{0}(\Omega).\nonumber
\end{eqnarray}
By Theorem \ref{estimates} we have $\|u_n\|_{H^1(\Omega)}\leq C,\
\|\varphi_n\|_{W^{1,r}(\Omega)}\leq C$ for all $n$, where $C>0$
denotes a generic constant independent of $n$. Therefore, on a
subsequence
\begin{eqnarray*}
u_n \stackrel{w}\rightharpoonup u^*\ {\rm in}\ H^1(\Omega)\ {\rm
and}\ \varphi_n\stackrel{w}\rightharpoonup \varphi^*\ {\rm in}\
W^{1,r}(\Omega).
\end{eqnarray*}
Also, $\beta_n\in U_{\cal M}$ for all $n$ implies that $\beta_n\in
L^\infty(\Gamma_R)$ and
$\|\beta_n\|_{L^\infty(\Gamma_R)}\leq {\cal M}$ for all $n$.
Hence, on a subsequence
$\beta_n\stackrel{w*}\rightharpoonup{\beta^*}$ in
$L^\infty(\Gamma_R)$.
On the other hand, $\beta_n\in L^2(\Gamma_R)$ and
$\|\beta_n\|_{L^2(\Gamma_R)}\leq {\cal M}$ for each $n$ implies
$\beta_n\stackrel{w}\rightharpoonup\beta^*$ in $L^2(\Gamma_R)$.

As mentioned in Section~\ref{criticaltemp}, we have that
$H^1(\Omega)\subset\subset L^s(\Omega)$, and since $r>d$,
$W^{1,r}(\Omega)\subset\subset C(\bar\Omega)$, where $r$ and $s$
were chosen in (\ref{r}). Hence, on a subsequence
\begin{eqnarray}
\varphi_n&\stackrel{s}\rightarrow &\varphi^*\ {\rm in}\
C(\bar\Omega),\
\nabla\varphi_n\stackrel{w}\rightharpoonup\nabla\varphi^*\
{\rm in}\ L^r(\Omega),\nonumber\\
u_n&\stackrel{s}\rightarrow & u^*\ {\rm in}\ L^s(\Omega),\ \nabla
u_n\stackrel{w}\rightharpoonup\nabla u^*\ {\rm in}\
L^2(\Omega),\label{allinall}\\
\beta_n&\stackrel{w}\rightharpoonup&\beta^*\ {\rm in}\
L^2(\Gamma_R),\ \beta_n\stackrel{w*}\rightharpoonup\beta^*\
{\rm in}\ L^\infty(\Gamma_R).\nonumber
\end{eqnarray}
Now we need to show that $u^*=u(\beta^*)$ and
$\varphi^*=\varphi(\beta^*)$ solve (\ref{eq1}) with control
$\beta^*$, i.e., pass to the limit as $n\rightarrow\infty$ in
(\ref{minseq}).
From (\ref{allinall}) it is immediate that
\begin{eqnarray*}
\int_\Omega\nabla u_n\cdot\nabla v\,dx\rightarrow
\int_\Omega\nabla u^*\cdot\nabla v\,dx\ {\rm as}\
n\rightarrow\infty.
\end{eqnarray*}
Using the trace inequality, the fact that
$H^1(\Omega)\subset\subset L^2(\partial\Omega)$ and
$\beta_n\stackrel{w*}\rightharpoonup\beta^*$ in
$L^\infty(\Gamma_R)$ we can show that
\begin{eqnarray}\label{betat}
\int_{\Gamma_R}\beta_n u_n v\,ds\rightarrow
\int_{\Gamma_R}\beta^* u^* v\,ds\ {\rm as}\
n\rightarrow\infty.
\end{eqnarray}
Next we show that
\begin{eqnarray}\label{phi}
\int_\Omega\sigma(u_n)\,v\nabla\varphi_n\cdot\nabla\varphi{_0}\,dx
\rightarrow
\int_\Omega\sigma(u^*)\,v\nabla\varphi^*\cdot\nabla\varphi_{0}\,dx
\ {\rm as}\ n\rightarrow\infty.
\end{eqnarray}
We have
\begin{eqnarray*}
&&\Big|\int_\Omega\sigma(u_n)\,v\nabla\varphi_n\cdot\nabla
\varphi_{0}\,dx-
\int_\Omega\sigma(u^*)\,v\nabla\varphi^*\cdot\nabla\varphi_{0}\,dx
\Big|\\
&\leq&\Big|\int_\Omega[\sigma(u_n)-\sigma(u^*)]\,v\nabla\varphi_n\cdot\nabla
\varphi_{0}\,dx\Big|
+\Big|\int_\Omega\sigma(u^*)\,v\,(\nabla\varphi_n-\nabla\varphi^*)
\cdot\nabla\varphi_{0}\,dx\Big|\\
&\leq& K\|\nabla\varphi_0\|_{\infty}\int_\Omega
|u_n-u^*|\cdot|v|\cdot|\nabla\varphi_n|\,dx+\Big|\int_\Omega\sigma(u^*)\,v\,(\nabla\varphi_n-\nabla\varphi^*)
\cdot\nabla\varphi_{0}\,dx\Big|\\
&\leq& K\|\nabla\varphi_0\|_{\infty}
\|u_n-u^*\|_s\cdot\|v\|_2\cdot\|\nabla\varphi_n\|_r+\Big|\int_\Omega\sigma(u^*)\,v\,(\nabla\varphi_n-\nabla\varphi^*)
\cdot\nabla\varphi_{0}\,dx\Big|\rightarrow 0,
\end{eqnarray*}
where $K$ is the Lipschitz constant for $\sigma$, and where we
took into account that $u_n\stackrel{s}\rightarrow u^*$ in
$L^s(\Omega)$,
$\nabla\varphi\stackrel{w}\rightharpoonup\nabla\varphi^*$ in
$L^2(\Omega)$, and $\|\nabla\varphi_n\|_r\leq C$. This completes
the proof of (\ref{phi}). Similarly, using the corresponding
convergences from (\ref{allinall}) we can show that
\begin{eqnarray*}\label{nablaphi}
\int_\Omega (\varphi_{0}-\varphi_n)\,\sigma(u_n)\nabla
\varphi_n\cdot\nabla v\,dx\rightarrow\int_\Omega
(\varphi_{0}-\varphi^*)\,\sigma(u^*)\nabla \varphi^*\cdot\nabla
v\,dx\\
\int_\Omega\sigma(u_n)\nabla \varphi_n\cdot\nabla
w\,dx\rightarrow\int_\Omega\sigma(u^*)\nabla \varphi^*\cdot\nabla
w\,dx\ {\rm as }\ n\rightarrow\infty.
\end{eqnarray*}
More details on the above convergences can be found in \cite{Hry}.
Now letting $n\rightarrow\infty$ in (\ref{minseq}), we obtain
\begin{eqnarray*}
\int_\Omega\nabla u^*\cdot\nabla
v\,dx+\int_{\Gamma_R}\beta^* u^* v\,dx&=&\int_\Omega
(\varphi_{0}-\varphi^*)\sigma(u^*)\nabla \varphi^*\cdot\nabla
v\,dx\\
&+&\int_\Omega\sigma(u^*)\nabla
\varphi^*\cdot\nabla\varphi_{0}\,v\,dx\ \forall\,v\in H^{1}(\Omega) \\
\int_\Omega\sigma(u^*)\nabla \varphi^*\cdot\nabla w\,dx&=&0
\hskip0.2in \varphi_{0}-\varphi_n\in H_{0}^{1}(\Omega),
\forall\,w\in H^{1}_{0}(\Omega).
\end{eqnarray*}
Therefore $(u^*,\varphi^*)$ is a weak solution associated with
$\beta^*$: $u^*=u(\beta^*)$ and $\varphi^*=\varphi(\beta^*)$.

\noindent Now we show that $\beta^*$ is optimal. 
As $\lim_{n\rightarrow\infty}J(\beta_n)$ exists 
we conclude $\lim_{n\rightarrow\infty}\int_{\Gamma_R}
\beta_n^2\,ds$ exists and
\begin{eqnarray*}
\inf_{\beta\in U_M}J(\beta)&=&\lim_{n\rightarrow\infty}J(\beta_n)
=\lim_{n\rightarrow
\infty}\int_{\Omega}u_n\,dx+\lim_{n\rightarrow\infty}\int_{\Gamma_R}\beta_n^{2}\,ds\\
&\geq&\int_{\Omega}u^*\,dx+\int_{\Gamma_R}(\beta^{*})^{2}\,ds=J(\beta^*).
\end{eqnarray*}
This implies that $\beta^*$ is an optimal control.
\end{proof}

\section{Derivation of the optimality system}\label{optimality}

Our optimal control will be represented in terms of the solution
to the optimality system, which consists of the original state
system and the adjoint system whose construction, loosely
speaking, follows the standard technique of (i) deriving the
sensitivity equations (linearizing the state equations), (ii)
exchanging the role of test function and solution in the
linearization, and (iii) using the derivative of the cost function
with respect to the state as a non-homogeneity.

To obtain the necessary conditions for the optimality system we
differentiate the objective functional with respect to the
control. Since the objective functional depends on $u$, and $u$ is
coupled to $\varphi$ through a PDE, we will need to differentiate
$u$ and $\varphi$ with respect to the control $\beta$.
\begin{theorem}\label{sensitivity}
{\rm(Sensitivities)} If the boundary data $\varphi_{_0}$ are
sufficiently small, i.e., if
$\|\varphi_0\|_{W^{1,\infty}(\Omega)}$ is small enough, then the
mapping $\beta\mapsto(u,\varphi)$ is differentiable in the
following sense:
\begin{eqnarray}
\frac{u(\beta+\varepsilon\ell)-u(\beta)}{\varepsilon}&\stackrel{w}
\rightharpoonup&\psi_1\ {\rm in}\ H^1(\Omega),\nonumber\\
\label{psionetwo}\\[-1.5ex]
\frac{\varphi(\beta+\varepsilon\ell)-\varphi(\beta)}{\varepsilon}&\stackrel{w}
\rightharpoonup&\psi_2\ {\rm in}\ H^1_0(\Omega)\ {\rm as}\
\varepsilon\rightarrow 0 \nonumber
\end{eqnarray}
for any $\beta\in U_{\cal M}$ and $\ell\in
L^\infty(\partial\Omega)$ such that $(\beta+\varepsilon\ell)\in
U_{\cal M}$ for small $\varepsilon$. Moreover, the sensitivities,
$\psi_1\in H^1(\Omega)$ and $\psi_2\in H^1_0(\Omega)$, satisfy
\begin{eqnarray}
\Delta\psi_1+\sigma^{\prime}(u)|\nabla\varphi|^2\psi_1+2\sigma(u)
\nabla\varphi\cdot\nabla\psi_2&=&0\ {\rm in}\ \Omega,\nonumber\\
\nabla\cdot[\sigma^{\prime}(u)\psi_1\nabla\varphi+\sigma(u)\nabla\psi_2]&=&0
\ {\rm in}\ \Omega,\nonumber\\
\label{sensitivityeqn}\\[-3.7ex]
\frac{\partial\psi_1}{\partial n}+\beta\psi_1+\ell (u-u_1)&=&0\
{\rm on}\
\Gamma_R,\nonumber\\
\psi_1&=&0\ {\rm on}\ \Gamma_D,\nonumber\\
\psi_2&=&0\ {\rm on}\ \partial\Omega.\nonumber
\end{eqnarray}
\end{theorem}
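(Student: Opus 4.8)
The plan is to use the classical difference-quotient (linearization) argument, with the smallness of $\|\varphi_0\|_{W^{1,\infty}(\Omega)}$ entering exactly to close an energy estimate. Fix $\beta\in U_{\cal M}$ and $\ell\in L^\infty(\partial\Omega)$ as in the statement, and for small $\varepsilon>0$ put $u^\varepsilon:=u(\beta+\varepsilon\ell)$, $\varphi^\varepsilon:=\varphi(\beta+\varepsilon\ell)$, the solutions furnished by Theorem~\ref{existreg}; they obey $0\leq u^\varepsilon\leq N<u_*$ with $N$ independent of $\varepsilon$, the uniform bounds $\|\varphi^\varepsilon\|_{W^{1,r}(\Omega)}\leq\Phi$ and $\|u^\varepsilon\|_{H^1(\Omega)}\leq\tilde C$ of Theorem~\ref{estimates}, and $\|\varphi^\varepsilon\|_\infty\leq\|\varphi_0\|_\infty$ by the maximum principle. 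Set $\psi_1^\varepsilon:=\varepsilon^{-1}(u^\varepsilon-u)\in V_D$ and $\psi_2^\varepsilon:=\varepsilon^{-1}(\varphi^\varepsilon-\varphi)\in H^1_0(\Omega)$, subtract the weak identities \eqref{eq1} for $\beta+\varepsilon\ell$ and for $\beta$, and divide by $\varepsilon$; writing $\varepsilon^{-1}(\sigma(u^\varepsilon)-\sigma(u))=\sigma'(\xi^\varepsilon)\psi_1^\varepsilon$ with $\xi^\varepsilon$ between $u$ and $u^\varepsilon$, and $\varepsilon^{-1}((\beta+\varepsilon\ell)(u^\varepsilon-u_1)-\beta(u-u_1))=\beta\psi_1^\varepsilon+\ell(u^\varepsilon-u_1)$ on $\Gamma_R$, one obtains a \emph{linear} weak system for $(\psi_1^\varepsilon,\psi_2^\varepsilon)$. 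After this algebra every term containing $\psi_1^\varepsilon$ carries it against the \emph{fixed} gradient $\nabla\varphi$ (never $\nabla\varphi^\varepsilon$), and every term containing $\nabla\psi_2^\varepsilon$ carries a uniformly bounded coefficient, so no product of two merely weakly convergent sequences will appear in the limit.

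The crucial step --- and the one I expect to be the main obstacle --- is a uniform bound $\|\psi_1^\varepsilon\|_{H^1(\Omega)}+\|\psi_2^\varepsilon\|_{H^1(\Omega)}\leq C_\ell$, independent of $\varepsilon$. Testing the $\varphi$-component of the linear system with $\psi_2^\varepsilon$ and using $\sigma(u^\varepsilon)\geq\sigma(N)>0$ (monotone decrease of $\sigma$) together with the Sobolev embedding $H^1(\Omega)\hookrightarrow L^s(\Omega)$, $s=2r/(r-2)$ from \eqref{r}, gives $\|\nabla\psi_2^\varepsilon\|_2\leq c\,\mu\,\Phi\,\|\psi_1^\varepsilon\|_{H^1(\Omega)}$, where $\Phi\to 0$ as $\|\varphi_0\|_{W^{1,\infty}}\to 0$ by its formula from Theorem~\ref{estimates}. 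Testing the $u$-component with $\psi_1^\varepsilon$, discarding the nonnegative boundary term $\int_{\Gamma_R}\beta(\psi_1^\varepsilon)^2$ (so that, as promised in the Introduction, no positivity of $\beta$ is used), and estimating the right-hand side by H\"older with the exponents of \eqref{r} and their conjugates, by the trace inequality, and by $\|\varphi^\varepsilon\|_\infty\leq\|\varphi_0\|_\infty$, $\|\nabla\varphi\|_r\leq\Phi$, $\|\nabla\varphi_0\|_\infty$ together with the bound on $\nabla\psi_2^\varepsilon$ just obtained, I arrive at
$$
\|\nabla\psi_1^\varepsilon\|_2^2\ \leq\ C(\ell)\,\|\psi_1^\varepsilon\|_{H^1(\Omega)}\ +\ \delta(\varphi_0)\,\|\psi_1^\varepsilon\|_{H^1(\Omega)}^2,
$$
with $\delta(\varphi_0)\to 0$ as $\|\varphi_0\|_{W^{1,\infty}}\to 0$. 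Since $\psi_1^\varepsilon=0$ on $\Gamma_D$, the Poincar\'e-type inequality for $(\Omega,\Gamma_D)$ (Theorem~5.8 of \cite{Rod}, used already in \eqref{normequivalence}) gives $\|\psi_1^\varepsilon\|_{H^1(\Omega)}^2\leq k^{-1}\|\nabla\psi_1^\varepsilon\|_2^2$, so once $\|\varphi_0\|_{W^{1,\infty}}$ is small enough that $k^{-1}\delta(\varphi_0)<1$ the quadratic term is absorbed into the left side, yielding the uniform bound on $\psi_1^\varepsilon$, and then on $\psi_2^\varepsilon$ via the first estimate of this paragraph.

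Armed with these bounds, $u^\varepsilon-u=\varepsilon\psi_1^\varepsilon\to 0$ and $\varphi^\varepsilon-\varphi=\varepsilon\psi_2^\varepsilon\to 0$ in $H^1(\Omega)$, so along a subsequence $u^\varepsilon\to u$ and $\varphi^\varepsilon\to\varphi$ a.e.; hence $\sigma(u^\varepsilon)\to\sigma(u)$ and $\sigma'(\xi^\varepsilon)\to\sigma'(u)$ a.e. and, being bounded by $\mu$, in every $L^q(\Omega)$, $q<\infty$. Also $\psi_1^\varepsilon\rightharpoonup\psi_1$ in $H^1(\Omega)$ and $\psi_2^\varepsilon\rightharpoonup\psi_2$ in $H^1_0(\Omega)$, with strong convergence in $L^s(\Omega)$ and in $L^2(\Gamma_R)$ by the compact embeddings $H^1(\Omega)\subset\subset L^s(\Omega)$ and $H^1(\Omega)\subset\subset L^2(\Gamma_R)$, and the constraints $\psi_1=0$ on $\Gamma_D$ and $\psi_2\in H^1_0(\Omega)$ are preserved. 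Each term of the linear weak system is then a product of a factor converging strongly in $L^2$ or $L^s$ (using dominated convergence, together with $\|\varphi^\varepsilon\|_\infty\leq\|\varphi_0\|_\infty$, for the $(\varphi_0-\varphi^\varepsilon)\sigma(u^\varepsilon)$-type coefficients) with a factor converging weakly in $L^2$ or $H^1$, or strongly in $L^s(\Omega)$ or $L^2(\Gamma_R)$; passing to the limit lands on the weak form of \eqref{sensitivityeqn}. Finally, the energy estimate of the second paragraph, applied to the difference of two weak solutions of the linear system \eqref{sensitivityeqn} with the same $\ell$ (their difference solves it with $\ell=0$), shows that system has at most one weak solution once $\|\varphi_0\|_{W^{1,\infty}}$ is small; hence the weak limit does not depend on the subsequence, the full difference quotients converge as in \eqref{psionetwo}, and integrating the limiting identities by parts (using $\nabla\cdot(\sigma(u)\nabla\varphi)=0$) recovers \eqref{sensitivityeqn} in the stated strong form.
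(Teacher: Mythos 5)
Your proposal follows essentially the same route as the paper: form the difference quotients, bound $\|\nabla\psi_2^\varepsilon\|_2$ in terms of $\|\psi_1^\varepsilon\|_{H^1}$ from the $\varphi$-equation using the uniform lower bound $\sigma\geq\sigma(N)>0$ and the $W^{1,r}$ bound from Theorem~\ref{estimates}, feed this into an energy estimate for $\psi_1^\varepsilon$, absorb the quadratic term via the Poincar\'e inequality on $V_D$ under the smallness of $\|\varphi_0\|_{W^{1,\infty}}$, pass to the limit using the compact embeddings, and then argue uniqueness of the limits from linearity of \eqref{sensitivityeqn} to upgrade subsequential to full convergence. The only cosmetic divergences (explicit mean-value form $\sigma'(\xi^\varepsilon)\psi_1^\varepsilon$, using $\sigma(u^\varepsilon)$ rather than $\sigma(u)$ as the coercivity coefficient, and spelling out uniqueness via the same energy estimate rather than invoking non-degeneracy) do not change the argument.
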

\begin{proof}
We follow \cite{Hry} with appropriate modifications where
necessary. Earlier we denoted $u=u(\beta)$ and
$\varphi=\varphi(\beta)$. Denote also
$u^\varepsilon=u(\beta^\varepsilon)$,
$\varphi^\varepsilon=\varphi(\beta^\varepsilon)$, where
$\beta^\varepsilon\stackrel{\rm{def}}=\beta+\varepsilon\ell$. The
weak formulation for $(u^\varepsilon,\varphi^\varepsilon)$ is
\begin{eqnarray}
\int_\Omega\nabla u^\varepsilon\nabla
v\,dx+\int_{\Gamma_R}\beta^\varepsilon (u^\varepsilon-u_1)
v\,ds&=&\int_\Omega
(\varphi_{0}-\varphi^\varepsilon)\,\sigma(u^\varepsilon)\nabla
\varphi^\varepsilon\nabla
v\,dx \nonumber\\
&+&\int_\Omega\sigma(u^\varepsilon)v\nabla
\varphi^\varepsilon\nabla\varphi_{0}\,dx\quad \forall\,v\in
V_D(\Omega),\label{sensitivepsilon}\\[-1.ex]
\int_\Omega\sigma(u^\varepsilon)\nabla \varphi^\varepsilon\nabla
w\,dx&=&0\quad \varphi_{0}-\varphi^\varepsilon\in
H_{0}^{1}(\Omega), \forall\,w\in H^{1}_{0}(\Omega).\nonumber
\end{eqnarray}
Similarly, for $(u,\varphi)$ we have
\begin{eqnarray}
\int_\Omega\nabla u\cdot\nabla \tilde v\,dx+\int_{\Gamma_R}\beta
(u-u_1) \tilde v\,ds&=&\int_\Omega
(\varphi_{0}-\varphi)\,\sigma(u)\nabla \varphi\cdot\nabla
\tilde v\,dx \nonumber\\
&+&\int_\Omega\sigma(u)\nabla
\varphi\cdot\nabla\varphi_{0}\,\tilde v\,dx\quad \forall\,\tilde
v\in
V_D(\Omega),\label{sensitivestim}\\[-1.ex]
\int_\Omega\sigma(u)\nabla \varphi\cdot\nabla \tilde
w\,dx&=&0\quad \varphi-\varphi_{0}\in H_{0}^{1}(\Omega),
\forall\,\tilde w\in H^{1}_{0}(\Omega).\nonumber
\end{eqnarray}
Take the test functions $v=(u^\varepsilon-u)/{\varepsilon}$,
$\tilde v=(u^\varepsilon-u)/{\varepsilon}$,
$w=(\varphi^\varepsilon-\varphi)/{\varepsilon}$, and $\tilde
w=(\varphi^\varepsilon-\varphi)/{\varepsilon}$, subtract
corresponding equations in (\ref{sensitivestim}) from
(\ref{sensitivepsilon}), and divide by $\varepsilon$ to obtain
\begin{eqnarray}
&\int_\Omega\nabla
\Big(\frac{u^\varepsilon-u}{\varepsilon}\Big)\cdot\nabla
\Big(\frac{u^\varepsilon-u}{\varepsilon}\Big)\,dx+\int_{\Gamma_R}
\beta\Big(\frac{u^\varepsilon-u}{\varepsilon}\Big)\Big(\frac{u^\varepsilon-u}
{\varepsilon}\Big)\,ds\nonumber\\
&=-\int_{\Gamma_R}\ell (u^\varepsilon-u_1)
\Big(\frac{u^\varepsilon-u}{\varepsilon}\Big)\,ds\nonumber\\
&+\frac{1}{\varepsilon}\int_\Omega
\Big[(\varphi_{0}-\varphi^\varepsilon)\,\sigma(u^\varepsilon)
\nabla\varphi^\varepsilon-(\varphi_{0}-\varphi)\,\sigma(u)
\nabla\varphi\Big]\cdot\nabla\Big(\frac{u^\varepsilon-u}{\varepsilon}\Big)\,dx\nonumber\\
\label{sensitivdifference}\\[-2.3ex]
&+\frac{1}{\varepsilon}\int_\Omega\Big[\sigma(u^\varepsilon)\nabla
\varphi^\varepsilon-\sigma(u)\nabla
\varphi\Big]\cdot\nabla\varphi_{0}\Big(\frac{u^\varepsilon-u}{\varepsilon}\Big)\,dx,\nonumber\\
&\frac{1}{\varepsilon}\int_\Omega\Big[\sigma(u^\varepsilon)\nabla
\varphi^\varepsilon\cdot\nabla\Big(\frac{\varphi^\varepsilon-\varphi}{\varepsilon}\Big)
-\sigma(u)\nabla\varphi\cdot\nabla\Big(\frac{\varphi^\varepsilon-\varphi}{\varepsilon}
\Big)\Big]\,dx=0.\nonumber
\end{eqnarray}


We derive $H^1(\Omega)$ estimate for
$(\varphi^\varepsilon-\varphi)/\varepsilon$ first. Since
$(\varphi^\varepsilon-\varphi)/\varepsilon\in H_0^1(\Omega)$ it
follows from the Poincar\'{e} inequality that it is sufficient to
have a bound on
$\|\nabla(\varphi^\varepsilon-\varphi)/\varepsilon\|_2$.

The second equation in \eqref{sensitivdifference} implies
\begin{equation}\label{phiepsilon}
\int_\Omega\sigma(u^\varepsilon)\nabla
\Big(\frac{\varphi^\varepsilon}{\varepsilon}\Big)\cdot\nabla\Big
(\frac{\varphi^\varepsilon-\varphi}{\varepsilon}\Big)\,dx
=\int_\Omega\sigma(u)\nabla
\Big(\frac{\varphi}{\varepsilon}\Big)\cdot\nabla\Big
(\frac{\varphi^\varepsilon-\varphi}{\varepsilon}\Big)\,dx.
\end{equation}
Taking into account \eqref{phiepsilon} we can write
\begin{equation}\label{phiestim}
\begin{split}
&\int_\Omega\sigma(u)\Big|\nabla\Big
(\frac{\varphi^\varepsilon-\varphi}{\varepsilon}\Big)\Big|^2\,dx=
\int_\Omega\sigma(u)\nabla\Big
(\frac{\varphi^\varepsilon-\varphi}{\varepsilon}\Big)\cdot\nabla\Big
(\frac{\varphi^\varepsilon-\varphi}{\varepsilon}\Big)\,dx\\
&=\int_\Omega\sigma(u)\nabla
\Big(\frac{\varphi^\varepsilon}{\varepsilon}\Big)\cdot\nabla\Big
(\frac{\varphi^\varepsilon-\varphi}{\varepsilon}\Big)\,dx-\int_\Omega\sigma(u)\nabla
\Big(\frac{\varphi}{\varepsilon}\Big)\cdot\nabla\Big
(\frac{\varphi^\varepsilon-\varphi}{\varepsilon}\Big)\,dx\\
&=\int_\Omega\sigma(u)\nabla
\Big(\frac{\varphi^\varepsilon}{\varepsilon}\Big)\cdot\nabla\Big
(\frac{\varphi^\varepsilon-\varphi}{\varepsilon}\Big)\,dx-
\int_\Omega\sigma(u^\varepsilon)\nabla\Big(\frac{\varphi^\varepsilon}
{\varepsilon}\Big)\cdot\nabla\Big(\frac{\varphi^\varepsilon-\varphi}{\varepsilon}\Big)\,dx\\
&=\int_\Omega\Big(\sigma(u)-\sigma(u^\varepsilon)\Big)\nabla
\Big(\frac{\varphi^\varepsilon}{\varepsilon}\Big)\cdot\nabla\Big
(\frac{\varphi^\varepsilon-\varphi}{\varepsilon}\Big)\,dx.
\end{split}
\end{equation}
{\bf Remark 1.} Observe that for a given weak solution $u$, it
follows from Theorem~\ref{existreg}, that if we set
$C_1(u)=\sigma(N)$ with $N:=\|u\|_{\infty}$, then (since $0\leq
u\leq N < u_{*}$) we will have $0<C_1(u)\leq\sigma(u)$. Thus, we
can write
\begin{equation}
\begin{split}
&C_1(u)\int_\Omega\Big|\nabla\Big
(\frac{\varphi^\varepsilon-\varphi}{\varepsilon}\Big)\Big|^2\,dx\leq
\int_\Omega\sigma(u)\Big|\nabla\Big
(\frac{\varphi^\varepsilon-\varphi}{\varepsilon}\Big)\Big|^2\,dx\\
&=\int_\Omega\Big(\sigma(u)-\sigma(u^\varepsilon)\Big)\nabla
\Big(\frac{\varphi^\varepsilon}{\varepsilon}\Big)\cdot\nabla\Big
(\frac{\varphi^\varepsilon-\varphi}{\varepsilon}\Big)\,dx\\
&\leq\int_\Omega\Big|\frac{\sigma(u)-\sigma(u^\varepsilon)}{\varepsilon}\Big|
\cdot|\nabla\varphi^\varepsilon|\cdot\Big|\nabla\Big(\frac{\varphi^\varepsilon-\varphi}
{\varepsilon}\Big)\Big|\,dx\\
&\leq K\int_\Omega\Big|\frac{u^\varepsilon-u}{\varepsilon}\Big|
\cdot|\nabla\varphi^\varepsilon|\cdot\Big|\nabla\Big(\frac{\varphi^\varepsilon-\varphi}
{\varepsilon}\Big)\Big|\,dx\\
&\leq
K\Big\|\frac{u^\varepsilon-u}{\varepsilon}\Big\|_s\cdot\|\nabla
\varphi^\varepsilon\|_r\cdot\Big\|\nabla\Big(\frac{\varphi^\varepsilon-\varphi}
{\varepsilon}\Big)\Big\|_2,
\end{split}
\end{equation}
where we used \eqref{phiestim}. Thus,
$C_1(u)\Big\|\nabla\Big(\frac{\varphi^\varepsilon-\varphi}
{\varepsilon}\Big)\Big\|_2^2\leq
K\Big\|\frac{u^\varepsilon-u}{\varepsilon}\Big\|_s\cdot\|\nabla
\varphi^\varepsilon\|_r\cdot\Big\|\nabla\Big(\frac{\varphi^\varepsilon-\varphi}
{\varepsilon}\Big)\Big\|_2$ and therefore
\begin{equation}\label{prelimsensitivity}
\Big\|\nabla\Big(\frac{\varphi^\varepsilon-\varphi}
{\varepsilon}\Big)\Big\|_2\leq
\frac{K}{C_1(u)}\Big\|\frac{u^\varepsilon-u}{\varepsilon}\Big\|_s\cdot\|\nabla
\varphi^\varepsilon\|_r\,.
\end{equation}
Since $H^1(\Omega)\subset L^s(\Omega)$ we have \vspace{-0.07in}
\begin{equation}\label{H1imbedding}
\Big\|\frac{u^\varepsilon-u}{\varepsilon}\Big\|_s\leq M_1
\Big\|\frac{u^\varepsilon-u}{\varepsilon}\Big\|_{H^1(\Omega)}.
\end{equation}
By Theorem \ref{estimates} we have
$\|\nabla\varphi^\varepsilon\|_r\leq\Phi$. Substituting this
estimate and \eqref{H1imbedding} into \eqref{prelimsensitivity} we
get
\begin{equation}
\Big\|\nabla\Big(\frac{\varphi^\varepsilon-\varphi}
{\varepsilon}\Big)\Big\|_2\leq
\frac{KM_1\Phi}{C_1(u)}\,\Big\|\frac{u^\varepsilon-u}{\varepsilon}\Big\|_{H^1(\Omega)}.
\end{equation}
Using the  Poincar\'{e} inequality, we obtain
\begin{equation}\label{keysensitestim}
\Big\|\frac{\varphi^\varepsilon-\varphi}
{\varepsilon}\Big\|_{H^1(\Omega)}\leq
C_6\Big\|\nabla\Big(\frac{\varphi^\varepsilon-\varphi}
{\varepsilon}\Big)\Big\|_2\leq
\frac{C_6KM_1\Phi}{C_1(u)}\,\Big\|\frac{u^\varepsilon-u}{\varepsilon}\Big\|_{H^1(\Omega)}.
\end{equation}
Now we proceed to estimate the $H^1$ norm of
$(u^\varepsilon-u)/\varepsilon$. We obtain from
\eqref{sensitivdifference}
\begin{equation*}
\begin{split}
&\int_\Omega\Big|\nabla
\Big(\frac{u^\varepsilon-u}{\varepsilon}\Big)\Big|^2\,dx\leq\int_\Omega\Big|\nabla
\Big(\frac{u^\varepsilon-u}{\varepsilon}\Big)\Big|^2\,dx+\int
_{\Gamma_R}\beta\Big(\frac{u^\varepsilon-u}{\varepsilon}
\Big)^2\,ds\\
&=\Big|-\int_{\Gamma_R}\ell(u^\varepsilon-u_1)\Big(\frac{u^\varepsilon-u}{\varepsilon}
\Big)\,ds+{\cal C}+{\cal D}\Big|,
\end{split}
\end{equation*}
where ${\cal
C}\stackrel{\text{def}}=\frac{1}{\varepsilon}\int_\Omega
\Big[(\varphi_{_0}-\varphi^\varepsilon)\,\sigma(u^\varepsilon)
\nabla\varphi^\varepsilon-(\varphi_{_0}-\varphi)\,\sigma(u)
\nabla\varphi\Big]\cdot\nabla\Big(\frac{u^\varepsilon-u}{\varepsilon}\Big)\,dx
=\int_\Omega
\Big[(\varphi_{_0}-\varphi)\Big(\frac{\sigma(u^\varepsilon)\nabla\varphi^\varepsilon-
\sigma(u)\nabla\varphi}
{\varepsilon}\Big)+\Big(\frac{\varphi-\varphi^\varepsilon}{\varepsilon}\Big)\sigma(u^\varepsilon
)\nabla\varphi^\varepsilon\Big]\cdot\nabla\Big(\frac{u^\varepsilon-u}{\varepsilon}\Big)\,dx$
and ${\cal
D}\stackrel{\text{def}}=\frac{1}{\varepsilon}\int_\Omega\Big[\sigma(u^\varepsilon)\nabla
\varphi^\varepsilon-\sigma(u)\nabla
\varphi\Big]\cdot\nabla\varphi_{_0}\Big(\frac{u^\varepsilon-u}{\varepsilon}\Big)\,dx.$
First, we estimate $|\cal C|$ to get
\begin{equation*}
\begin{split}
|\cal C|&\leq\int_\Omega
|\varphi_{_0}-\varphi|\cdot\Big|\frac{\sigma(u^\varepsilon)\nabla\varphi^\varepsilon-
\sigma(u)\nabla\varphi}
{\varepsilon}\Big|\cdot\Big|\nabla\Big(\frac{u^\varepsilon-u}{\varepsilon}\Big)\Big|\,dx\\
&+\int_\Omega\Big|\frac{\varphi-\varphi^\varepsilon}{\varepsilon}\Big|\,\sigma(u^\varepsilon
)|\nabla\varphi^\varepsilon|\cdot\Big|\nabla\Big(\frac{u^\varepsilon-u}{\varepsilon}\Big)\Big|\,dx\\
&\leq 2\tilde M\int_\Omega
\Big|\frac{\sigma(u^\varepsilon)\nabla\varphi^\varepsilon-
\sigma(u)\nabla\varphi}
{\varepsilon}\Big|\cdot\Big|\nabla\Big(\frac{u^\varepsilon-u}{\varepsilon}\Big)\Big|\,dx\\
&+
\mu\int_\Omega\Big|\frac{\varphi-\varphi^\varepsilon}{\varepsilon}\Big|\cdot
|\nabla\varphi^\varepsilon|\cdot\Big|\nabla\Big(\frac{u^\varepsilon-u}{\varepsilon}\Big)\Big|\,dx\\
&=2\tilde M\int_\Omega
\Big|\frac{(\sigma(u^\varepsilon)-\sigma(u)+\sigma(u))\nabla\varphi^\varepsilon-
\sigma(u)\nabla\varphi}
{\varepsilon}\Big|\cdot\Big|\nabla\Big(\frac{u^\varepsilon-u}{\varepsilon}\Big)\Big|\,dx\\
&+
\mu\int_\Omega\Big|\frac{\varphi-\varphi^\varepsilon}{\varepsilon}\Big|\cdot
|\nabla\varphi^\varepsilon|\cdot\Big|\nabla\Big(\frac{u^\varepsilon-u}{\varepsilon}\Big)\Big|\,dx\\
&\leq 2\tilde
M\int_\Omega\Big|\frac{\sigma(u^\varepsilon)-\sigma(u)}{\varepsilon}\Big|
\cdot
|\nabla\varphi^\varepsilon|\cdot\Big|\nabla\Big(\frac{u^\varepsilon-u}{\varepsilon}\Big)\Big|\,dx\\
&+2\tilde
M\int_\Omega\sigma(u)\cdot\Big|\nabla\Big(\frac{\varphi^\varepsilon-\varphi}{\varepsilon}\Big)\Big|
\cdot\Big|\nabla\Big(\frac{u^\varepsilon-u}{\varepsilon}\Big)\Big|\,dx\\
&+\mu\int_\Omega\Big|\frac{\varphi-\varphi^\varepsilon}{\varepsilon}\Big|\cdot
|\nabla\varphi^\varepsilon|\cdot\Big|\nabla\Big(\frac{u^\varepsilon-u}{\varepsilon}\Big)\Big|\,dx\\
&\leq
2\tilde{M}K\int_\Omega\Big|\frac{u^\varepsilon-u}{\varepsilon}\Big|
\cdot
|\nabla\varphi^\varepsilon|\cdot\Big|\nabla\Big(\frac{u^\varepsilon-u}{\varepsilon}\Big)\Big|\,dx\\
&+2\tilde M
\mu\int_\Omega\Big|\nabla\Big(\frac{\varphi^\varepsilon-\varphi}{\varepsilon}\Big)\Big|
\cdot\Big|\nabla\Big(\frac{u^\varepsilon-u}{\varepsilon}\Big)\Big|\,dx\\
&+\mu\int_\Omega\Big|\frac{\varphi-\varphi^\varepsilon}{\varepsilon}\Big|\cdot
|\nabla\varphi^\varepsilon|\cdot\Big|\nabla\Big(\frac{u^\varepsilon-u}{\varepsilon}\Big)\Big|\,dx.
\end{split}
\end{equation*}
Next, estimating $|\cal D|$ we obtain 
\begin{equation*}
\begin{split}
|{\cal D}|& \leq
\int_\Omega|\nabla\varphi_{_0}|\cdot
\Big|\frac{\sigma(u^\varepsilon)-\sigma(u)}{\varepsilon}\Big|\cdot|\nabla\varphi^\varepsilon|\cdot\Big|
\frac{u^\varepsilon-u}{\varepsilon}\Big|\,dx\\
&+\int_\Omega\sigma(u)|\nabla\varphi_{_0}|\cdot\Big|\nabla\Big(\frac{\varphi^\varepsilon-
\varphi}{\varepsilon}\Big)\Big|\cdot\Big|\frac{u^\varepsilon-u}{\varepsilon}\Big|\,dx\\
&\leq K\|\nabla\varphi_{_0}\|_\infty\int_\Omega
\Big|\frac{u^\varepsilon-u}{\varepsilon}\Big|\cdot|\nabla\varphi^\varepsilon|\cdot\Big|
\frac{u^\varepsilon-u}{\varepsilon}\Big|\,dx\\
&+\mu\|\nabla\varphi_{_0}\|_\infty\int_\Omega\Big|\nabla\Big(\frac{\varphi^\varepsilon-
\varphi}{\varepsilon}\Big)\Big|\cdot\Big|\frac{u^\varepsilon-u}{\varepsilon}\Big|\,dx.
\end{split}
\end{equation*}

Thus we have

\begin{equation*}
\begin{split}
&\int_\Omega\Big|\nabla
\Big(\frac{u^\varepsilon-u}{\varepsilon}\Big)\Big|^2\,dx\leq\int_{\Gamma_R}|\ell|\cdot|u^\varepsilon-u_1|\cdot\Big|
\frac{u^\varepsilon-u}{\varepsilon}\Big|\,ds+|{\cal C}|+|{\cal
D}|\\
&\leq {\cal M}\int_{\Gamma_R}|u^\varepsilon-u_1|\cdot\Big|
\frac{u^\varepsilon-u}{\varepsilon}\Big|\,ds+|{\cal C}|+|{\cal
D}|\\
& \leq {\cal M}
\|u^\varepsilon-u_1\|_{L^2(\Gamma_R)}\cdot\Big\|\frac{u^\varepsilon-u}{\varepsilon}
\Big\|_{L^2(\partial\Omega)}+|{\cal C}|+|{\cal
D}|\\
&\leq {\cal M} M_2^2\|u^\varepsilon-u_1\|_{H^1(\Omega)}\cdot\Big
\|\frac{u^\varepsilon-u}{\varepsilon}\Big\|_{H^1(\Omega)}+|{\cal
C}|+|{\cal D}|,
\end{split}
\end{equation*}
where we have used the trace inequality
$\|u^\varepsilon-u_1\|_{L^2(\partial\Omega)}\leq
M_2\|u^\varepsilon\|_{H^1(\Omega)}$. Taking into account the
estimates for $|\cal C|$ and $|\cal D|$ we get
\begin{equation*}
\begin{split}
&\int_\Omega\Big|\nabla
\Big(\frac{u^\varepsilon-u}{\varepsilon}\Big)\Big|^2\,dx 
\leq {\cal{M}}M_2^2\|u^\varepsilon-u_1\|_{H^1(\Omega)}\cdot\Big
\|\frac{u^\varepsilon-u}{\varepsilon}\Big\|_{H^1(\Omega)}\\
&+2\tilde{M}K\int_\Omega\Big|\frac{u^\varepsilon-u}{\varepsilon}\Big|
\cdot
|\nabla\varphi^\varepsilon|\cdot\Big|\nabla\Big(\frac{u^\varepsilon-u}{\varepsilon}\Big)\Big|\,dx+2\tilde
M
\mu\int_\Omega\Big|\nabla\Big(\frac{\varphi^\varepsilon-\varphi}{\varepsilon}\Big)\Big|
\cdot\Big|\nabla\Big(\frac{u^\varepsilon-u}{\varepsilon}\Big)\Big|\,dx\\
&+\mu\int_\Omega\Big|\frac{\varphi-\varphi^\varepsilon}{\varepsilon}\Big|\cdot
|\nabla\varphi^\varepsilon|\cdot\Big|\nabla\Big(\frac{u^\varepsilon-u}{\varepsilon}\Big)\Big|\,dx+
K\|\nabla\varphi_{_0}\|_\infty\int_\Omega
\Big|\frac{u^\varepsilon-u}{\varepsilon}\Big|\cdot|\nabla\varphi^\varepsilon|\cdot\Big|
\frac{u^\varepsilon-u}{\varepsilon}\Big|\,dx\\
&+\mu\|\nabla\varphi_{_0}\|_\infty\int_\Omega\Big|\nabla\Big(\frac{\varphi^\varepsilon-
\varphi}{\varepsilon}\Big)\Big|\cdot\Big|\frac{u^\varepsilon-u}{\varepsilon}\Big|\,dx.
\end{split}
\end{equation*}
Using the H\"{o}lder inequality, and {\it a priori} bounds
\eqref{ubound}, \eqref{H1imbedding}, and \eqref{keysensitestim} we
have
\begin{equation*}
\begin{split}
&\int_\Omega\Big|\nabla
\Big(\frac{u^\varepsilon-u}{\varepsilon}\Big)\Big|^2\,dx
\\
&\leq {\cal M}M_2^2\tilde C\Big
\|\frac{u^\varepsilon-u}{\varepsilon}\Big\|_{H^1(\Omega)}\\
&+2\tilde MK\Big
\|\frac{u^\varepsilon-u}{\varepsilon}\Big\|_s\cdot\|\nabla\varphi^\varepsilon\|_r\cdot
\Big
\|\nabla\Big(\frac{u^\varepsilon-u}{\varepsilon}\Big)\Big\|_2\\
&+2\tilde M\mu\Big
\|\nabla\Big(\frac{\varphi^\varepsilon-\varphi}{\varepsilon}\Big)\Big\|_2\cdot\Big
\|\nabla\Big(\frac{u^\varepsilon-u}{\varepsilon}\Big)\Big\|_2\\
&+\mu
\Big\|\frac{\varphi^\varepsilon-\varphi}{\varepsilon}\Big\|_s\cdot
\|\nabla\varphi^\varepsilon\|_r\cdot\Big
\|\nabla\Big(\frac{u^\varepsilon-u}{\varepsilon}\Big)\Big\|_2\\
&+K\|\nabla\varphi_{_0}\|_\infty
\cdot\Big\|\frac{u^\varepsilon-u}{\varepsilon}\Big\|_s\cdot\|\nabla\varphi^\varepsilon\|_r\cdot
\Big\|\frac{u^\varepsilon-u}{\varepsilon}\Big\|_2\\
&+\mu\|\nabla\varphi_{_0}\|_\infty\cdot\Big
\|\nabla\Big(\frac{\varphi^\varepsilon-\varphi}{\varepsilon}\Big)\Big\|_2\cdot
\Big\|\frac{u^\varepsilon-u}{\varepsilon}\Big\|_2 \\
\end{split}
\end{equation*}
\begin{equation*}
\begin{split}
&\leq {\cal
M}M_2^2\tilde{C}\Big\|\frac{u^\varepsilon-u}{\varepsilon}\Big\|_{H^1(\Omega)}+2\tilde
MKM_1\Phi\Big\|\frac{u^\varepsilon-u}{\varepsilon}\Big\|_{H^1(\Omega)}^2\\
&+\frac{2\tilde M \mu
KM_1\Phi}{C_1(u)}\Big\|\frac{u^\varepsilon-u}{\varepsilon}\Big\|_{H^1(\Omega)}^2
+\frac{\mu C_6KM_1\Phi^2}{C_1(u)}\Big\|\frac{u^\varepsilon-u}{\varepsilon}\Big\|_{H^1(\Omega)}^2\\
&+K\|\nabla\varphi_{_0}\|_\infty
M_1\Phi\Big\|\frac{u^\varepsilon-u}{\varepsilon}\Big\|_{H^1(\Omega)}^2+\mu\|\nabla\varphi_{_0}\|_\infty
\frac{KM_1\Phi}{C_1(u)}\Big\|\frac{u^\varepsilon-u}{\varepsilon}\Big\|_{H^1(\Omega)}^2.
\end{split}
\end{equation*}
Since $u-u^{\varepsilon}\in V_D$, it follows from the extension of
the Poincar\'{e} inequality (see Theorem 5.8 in \cite{Rod}) that
there exists $k>0$ such that
\begin{equation*}
\begin{split}
&k\Big\|\frac{u^\varepsilon-u}{\varepsilon}\Big\|_{H^1(\Omega)}^2\leq
\int_\Omega\Big|\nabla
\Big(\frac{u^\varepsilon-u}{\varepsilon}\Big)\Big|^2\,dx\leq
{\cal M}M_2^2\tilde{C}\Big\|\frac{u^\varepsilon-u}{\varepsilon}\Big\|_{H^1(\Omega)}\\
&+2\tilde
MKM_1\Phi\Big\|\frac{u^\varepsilon-u}{\varepsilon}\Big\|_{H^1(\Omega)}^2+\frac{2\tilde
M \mu
KM_1\Phi}{C_1(u)}\Big\|\frac{u^\varepsilon-u}{\varepsilon}\Big\|_{H^1(\Omega)}^2
+\frac{\mu C_6KM_1\Phi^2}{C_1(u)}\Big\|\frac{u^\varepsilon-u}{\varepsilon}\Big\|_{H^1(\Omega)}^2\\
&+K\|\nabla\varphi_{_0}\|_\infty
M_1\Phi\Big\|\frac{u^\varepsilon-u}{\varepsilon}\Big\|_{H^1(\Omega)}^2+\mu\|\nabla\varphi_{_0}\|_\infty
\frac{KM_1\Phi}{C_1(u)}\Big\|\frac{u^\varepsilon-u}{\varepsilon}\Big\|_{H^1(\Omega)}^2.
\end{split}
\end{equation*}
By definition, $\Phi$ includes $\|\nabla\varphi_0\|_\infty$ and
$\|\varphi_0\|_{W^{1,\infty}(\Omega)}$. 
Hence,  if $\|\varphi_0\|_{W^{1,\infty}(\Omega)}$ is chosen small
enough so that
\begin{equation}
\begin{split}
k_1&\equiv k-2\tilde MKM_1\Phi-\frac{2\tilde M
\mu KM_1\Phi}{C_1(u)}-\frac{\mu C_6KM_1\Phi^2}{C_1(u)}\\
&-K\|\nabla\varphi_{_0}\|_\infty
M_1\Phi-\mu\|\nabla\varphi_{_0}\|_\infty \frac{KM_1\Phi}{C_1(u)}>0
\end{split}
\end{equation}
then
\begin{equation}\label{usensitivbound}
\Big\|\frac{u^\varepsilon-u}{\varepsilon}\Big\|_{H^1(\Omega)}\leq
\frac{{\cal M}M_2^2\tilde{C}}{k_1}\,,
\end{equation}
where the constant in \eqref{usensitivbound} does not depend on
$\varepsilon$. Consequently, \eqref{keysensitestim} yields
\begin{equation}\label{phisensitivbound}
\Big\|\frac{\varphi^\varepsilon-\varphi}
{\varepsilon}\Big\|_{H^1(\Omega)}\leq
\frac{C_6KM_1\Phi}{C_1(u)}\,\Big\|\frac{u^\varepsilon-u}{\varepsilon}\Big\|_{H^1(\Omega)}
\leq \frac{C_6KM_1\Phi}{C_1(u)}\cdot\frac{{\cal
M}M_2^2\tilde{C}}{k_1}\,.
\end{equation}

These estimates justify the existence of $\psi_1$ and $\psi_2$,
and the convergences in (\ref{psionetwo}). All in all, we have the
following convergences
\begin{eqnarray*}
\frac{u^\varepsilon-u}{\varepsilon}&\stackrel{w}\rightharpoonup&\psi_1\
{\rm in}\ H^1(\Omega),\
\frac{u^\varepsilon-u}{\varepsilon}\stackrel{s}\rightarrow\psi_1\
{\rm in}\ L^s(\Omega),\\
\frac{\varphi^\varepsilon-\varphi}
{\varepsilon}&\stackrel{w}\rightharpoonup&\psi_2\ {\rm in}\
H^1(\Omega),\ \frac{\varphi^\varepsilon-\varphi}
{\varepsilon}\stackrel{s}\rightarrow\psi_2\ {\rm
in}\ L^s(\Omega),\\
u^\varepsilon&\stackrel{s}\rightarrow& u\ {\rm in}\ L^s(\Omega),\
\varphi^\varepsilon\stackrel{s}\rightarrow\varphi\ {\rm in}\
C(\bar\Omega),\\
\nabla\varphi^\varepsilon&\stackrel{w}\rightharpoonup&\nabla\varphi\
{\rm in}\ L^r(\Omega),\
\frac{u^\varepsilon-u}{\varepsilon}\stackrel{w}\rightharpoonup\psi_1\
{\rm in}\ L^2(\Gamma_R),\\
\beta^\varepsilon&\stackrel{w}\rightharpoonup&\beta\ {\rm in}\
L^2(\Gamma_R),\
\beta^\varepsilon\stackrel{w*}\rightharpoonup\beta\ {\rm in}\
L^\infty(\Gamma_R)\ {\rm as}\ \varepsilon\rightarrow 0,
\end{eqnarray*}
and therefore, we can show that the sensitivities satisfy the
system (\ref{sensitivityeqn}).

\medskip

\noindent{\bf Remark 2.} At this point the convergences are on a
subsequence and in order to obtain the desired convergence for the
whole sequence, it needs to be shown that the limits $\psi_1$ and
$\psi_2$ are always the same for any subsequence. This uniqueness
of the limits follows from \eqref{sensitivityeqn} (since the
system in $\psi_1$ and $\psi_2$ is non-degenerate linear elliptic)
which $\psi_1$ and $\psi_2$ produced by subsequences will
necessarily satisfy.

Subtracting (\ref{sensitivestim}) from (\ref{sensitivepsilon}),
and dividing by $\varepsilon$:
\begin{eqnarray}\label{sensitivdifference1}
&&\int_\Omega\nabla
\Big(\frac{u^\varepsilon-u}{\varepsilon}\Big)\cdot\nabla
v\,dx+\int_{\Gamma_R}\beta
\Big(\frac{u^\varepsilon-u}{\varepsilon}\Big)v\,ds+\int_{\Gamma_R}
\ell (u^\varepsilon-u_1)v\,ds\nonumber\\
&=&\frac{1}{\varepsilon}\int_\Omega
\Big[(\varphi_{_0}-\varphi^\varepsilon)\,\sigma(u^\varepsilon)
\nabla\varphi^\varepsilon\cdot\nabla
v-(\varphi_{_0}-\varphi)\,\sigma(u)
\nabla\varphi\cdot\nabla v\Big]\,dx\\
&+&\frac{1}{\varepsilon}\int_\Omega\Big[\sigma(u^\varepsilon)\nabla
\varphi^\varepsilon-\sigma(u)\nabla
\varphi\Big]\cdot\nabla\varphi_{_0}v\,dx \quad\forall\,v\in
V_D(\Omega),\nonumber
\end{eqnarray}
\begin{eqnarray}\label{sensitivdifference2}
&\frac{1}{\varepsilon}\int_\Omega\Big[\sigma(u^\varepsilon)\nabla
\varphi^\varepsilon\cdot\nabla w
-\sigma(u)\nabla\varphi\cdot\nabla w\Big]\,dx=0\quad\forall\,w\in
H_0^1(\Omega).
\end{eqnarray}
The convergence proofs for various terms are rather standard if sometimes
lengthy. For example, it can be shown easily that the terms on the
left hand side of (\ref{sensitivdifference1}) converge because of the
weak convergence of the corresponding sequences. We omit the details.
The first term on the right hand side of
(\ref{sensitivdifference1}) can be written as follows
\begin{eqnarray*}
&&\frac{1}{\varepsilon}\int_\Omega
\Big[(\varphi_{_0}-\varphi^\varepsilon)\,\sigma(u^\varepsilon)
\nabla\varphi^\varepsilon\nabla
v-(\varphi_{_0}-\varphi)\,\sigma(u) \nabla\varphi\nabla
v\Big]\,dx\\
&=&\frac{1}{\varepsilon}\int_\Omega
(\varphi_{_0}-\varphi^\varepsilon)\Big[\sigma(u^\varepsilon)\nabla\varphi^\varepsilon-
\sigma(u)\nabla\varphi\Big]\nabla
v\,dx+\frac{1}{\varepsilon}\int_\Omega
(\varphi-\varphi^\varepsilon)\sigma(u)\nabla\varphi\nabla
v\,dx\\
&=&\frac{1}{\varepsilon}\int_\Omega(\varphi_{_0}-\varphi^\varepsilon)
\Big[\sigma(u^\varepsilon)-\sigma(u)\Big]
\nabla\varphi^\varepsilon\nabla v\,dx
+\frac{1}{\varepsilon}\int_\Omega(\varphi_{_0}-\varphi^\varepsilon)
\,\sigma(u)\Big[\nabla\varphi^\varepsilon-\nabla\varphi\Big]\nabla
v\,dx\\
&+&\frac{1}{\varepsilon}\int_\Omega
(\varphi-\varphi^\varepsilon)\sigma(u)\nabla\varphi\nabla
v\,dx={\cal G}_1+{\cal G}_2+{\cal
F}.\\
\end{eqnarray*}
\noindent A detailed derivation for the convergence of ${\cal
G}_1$ can be found in \cite{Hry}. Therefore, for the sake of
completeness, we show the convergence of ${\cal G}_2$ and ${\cal
F}$ here. First, we need to show that
\begin{equation}\label{Fconverg}
{\cal F}\rightarrow-\int_\Omega
\psi_2\,\sigma(u)\nabla\varphi\nabla v\,dx\text{ as
$\varepsilon\rightarrow 0$}.
\end{equation}
Indeed, we can write
\begin{equation*}
\begin{split}
&\Big|\int_\Omega
\Big(\frac{\varphi-\varphi^\varepsilon}{\varepsilon}\Big)\sigma(u)\nabla\varphi\nabla
v\,dx-\int_\Omega(-\psi_2)\sigma(u)\nabla\varphi\nabla
v\,dx\Big|\\
&\leq
C_2\int_\Omega\Big|\frac{\varphi^\varepsilon-\varphi}{\varepsilon}-\psi_2\Big||\varphi||\nabla
v|\,dx\leq
C_2\Big\|\frac{\varphi^\varepsilon-\varphi}{\varepsilon}-\psi_2\Big\|_s\|\nabla\varphi\|_r\|\nabla
v\|_2\rightarrow 0\text{ as $\varepsilon\rightarrow 0$}.
\end{split}
\end{equation*}
This proves \eqref{Fconverg}.
Next, we show that
\begin{eqnarray}\label{G2}
{\cal G}_2\rightarrow\int_\Omega(\varphi_{0}-\varphi)
\,\sigma(u)\nabla\psi_2\nabla v\,dx\ {\rm as}\
\varepsilon\rightarrow 0.
\end{eqnarray}
We write
\begin{equation}
\begin{split}
{\cal G}_2
&=\frac{1}{\varepsilon}\int_\Omega(\varphi_{0}-\varphi)
\,\sigma(u)\nabla(\varphi^\varepsilon-\varphi)\nabla v\,dx\\
&+\frac{1}{\varepsilon}\int_\Omega(\varphi-\varphi^\varepsilon)
\,\sigma(u)\nabla(\varphi^\varepsilon-\varphi)\nabla v\,dx={\cal
G}_{21}+{\cal G}_{22}.
\end{split}
\end{equation}
For the term ${\cal G}_{21}$ we have
\begin{equation*}
\begin{split}
&\Big|{\cal G}_{21}-\int_\Omega(\varphi_{0}-\varphi)
\,\sigma(u)\nabla\psi_2\cdot\nabla v\,dx\Big|\\
&=\Big|\int_\Omega\Big[(\varphi_{0}-\varphi)
\,\sigma(u)\nabla\Big(\frac{\varphi^\varepsilon-\varphi}{\varepsilon}\Big)\cdot\nabla
v-(\varphi_{0}-\varphi) \,\sigma(u)\nabla\psi_2\cdot\nabla
v\Big]\,dx\\
&=\Big|\int_\Omega\Big[\nabla\Big(\frac{\varphi^\varepsilon-\varphi}{\varepsilon}\Big)-
\nabla\psi_2\Big](\varphi_{0}-\varphi)\,\sigma(u)\nabla
v\,dx\Big|\rightarrow 0\quad\text{as $\varepsilon\rightarrow 0$}
\end{split}
\end{equation*}
since
$\nabla(\varphi^\varepsilon-\varphi)/\varepsilon\stackrel{w}\rightharpoonup\nabla\psi_2$
in $L^2(\Omega)$ and $(\varphi_{0}-\varphi)\,\sigma(u)\nabla v\in
L^2(\Omega)$ because $r>d$ implies $\varphi_{0}-\varphi\in
W^{1,r}(\Omega)\subset\subset C(\bar\Omega)$. Now we show the
convergence of ${\cal G}_{22}$. Indeed, we have
\begin{equation*}
\begin{split}
|{\cal G}_{22}|&=\Big|\int_\Omega(\varphi-\varphi^\varepsilon)
\,\sigma(u)\nabla\Big(\frac{\varphi^\varepsilon-\varphi}{\varepsilon}\Big)\cdot\nabla
v\,dx \Big|\\
&\leq C_2\int_\Omega|\varphi-\varphi^\varepsilon|\cdot
\Big|\nabla\Big(\frac{\varphi^\varepsilon-\varphi}{\varepsilon}\Big)\Big|\cdot|\nabla
v|\,dx\\
&\leq
C_2\|\varphi-\varphi^\varepsilon\|_{C(\bar\Omega)}\cdot\Big\|\nabla
\Big(\frac{\varphi^\varepsilon-\varphi}{\varepsilon}\Big)
\Big\|_2\cdot\|\nabla v\|_2\rightarrow 0\quad\text{as
$\varepsilon\rightarrow 0$}
\end{split}
\end{equation*}
since $\|\varphi-\varphi^\varepsilon\|_{C(\bar\Omega)}\rightarrow
0$ as $\varepsilon\rightarrow 0$, and $\|\nabla
(\varphi^\varepsilon-\varphi)/\varepsilon)\|_2$ is bounded. This
ends the proof of \eqref{G2}.
Similarly, it can be shown for the second term on the right hand
side of (\ref{sensitivdifference1}) that:
\begin{eqnarray*}
\frac{1}{\varepsilon}\int_\Omega\Big[\sigma(u^\varepsilon)\nabla
\varphi^\varepsilon-\sigma(u)\nabla
\varphi\Big]\cdot\nabla\varphi_{0}v\,dx&\rightarrow&\int_\Omega
\sigma^\prime(u)\psi_1\nabla\varphi\cdot\nabla\varphi_{0}v\,dx\\
&+&\int_\Omega \sigma(u)\nabla\psi_2\cdot\nabla\varphi_{0}v\,dx\
{\rm as}\ \varepsilon\rightarrow 0.
\end{eqnarray*}
Finally, the terms of (\ref{sensitivdifference2}) satisfy
\begin{eqnarray*}
\frac{1}{\varepsilon}\int_\Omega\Big[\sigma(u^\varepsilon)\nabla
\varphi^\varepsilon\cdot\nabla w
-\sigma(u)\nabla\varphi\cdot\nabla
w\Big]\,dx&\rightarrow&\int_\Omega\sigma^\prime(u)\psi_1\nabla
\varphi\cdot\nabla w\,dx\\
&+&\int_\Omega\sigma(u)\nabla\psi_2\cdot\nabla w\,dx\ {\rm as}\
\varepsilon\rightarrow 0.
\end{eqnarray*}
Letting $\varepsilon\rightarrow 0$ in (\ref{sensitivdifference1})
and (\ref{sensitivdifference2}) we obtain
\begin{eqnarray}
&&\int_\Omega\nabla \psi_1\nabla
v\,dx+\int_{\Gamma_R}(\beta\psi_1+\ell (u-u_1))
v\,ds\nonumber=\int_\Omega
(\varphi_{_0}-\varphi)\,\sigma^\prime(u)\psi_1
\nabla\varphi\nabla v\,dx\nonumber\\
&+&\int_\Omega (\varphi_{_0}-\varphi)\,\sigma(u)\nabla\psi_2
\nabla v\,dx
-\int_\Omega\psi_2\,\sigma(u)\nabla\varphi\nabla v\,dx\nonumber\\
\label{sensitweakformul}\\[-3.2ex]
&+&\int_\Omega\sigma^\prime(u)\psi_1\nabla\varphi\nabla\varphi_{_0}v\,dx
+\int_\Omega\sigma(u)\nabla\psi_2\nabla\varphi_{_0}v\,dx\quad\forall\,v\in V_D(\Omega), \nonumber\\
&&\int_\Omega\sigma^\prime(u)\psi_1\nabla \varphi\nabla w
+\int_\Omega\sigma(u)\nabla\psi_2\nabla w\,dx=0\quad \forall\,w\in
H_0^1(\Omega).\nonumber
\end{eqnarray}
It can shown that the ``strong" formulation corresponding to
(\ref{sensitweakformul}) is given by
\begin{eqnarray}
\Delta\psi_1+\sigma^\prime(u)\psi_1|\nabla\varphi|^2+2\sigma(u)\nabla\varphi\cdot\nabla\psi_2&=&0
\ {\rm in}\ \Omega,\nonumber\\
\nabla\cdot(\sigma^\prime(u)\psi_1\nabla\varphi+\sigma(u)\nabla\psi_2)&=&0\
{\rm in}\ \Omega,\nonumber\\
\label{differentstate}\\[-2.9ex]
\frac{\partial\psi_1}{\partial n}+\beta\psi_1+\ell (u-u_1)&=&0\
{\rm on}\ \Gamma_R,\nonumber\\
\psi_1&=&0\ {\rm on}\ \Gamma_D,\nonumber\\
\psi_2&=&0\ {\rm on}\ \partial\Omega.\nonumber
\end{eqnarray}
\end{proof}

In order to characterize the optimal control, we need to introduce
adjoint functions $p$ and $q$ as well as the adjoint operator
associated with $\psi_1$ and $\psi_2$. Using the same reasoning as
in \cite{Hry}, it can be shown that the adjoint system is given by
\begin{eqnarray}
\Delta p+\sigma^{\prime}(u)|\nabla\varphi|^2p-\sigma^\prime(u)
\nabla\varphi\cdot\nabla q&=&1\ {\rm in}\ \Omega,\nonumber\\
\nabla\cdot[-2p\,\sigma(u)\nabla\varphi+\sigma(u)\nabla
q]&=&0\ {\rm in}\ \Omega,\nonumber\\
\label{adjoint1}\\[-3.9ex]
\frac{\partial p}{\partial n}+\beta^* p&=&0\ {\rm on}\
\Gamma_{R},\nonumber\\
p&=&0\ {\rm on}\ \Gamma_D,\nonumber\\
q&=&0\ {\rm on}\ \partial\Omega,\nonumber
\end{eqnarray}
where the nonhomogeneous term ``1" comes from differentiating the
integrand of $J(\beta)$ with respect to the state $u$.
\begin{theorem}\label{adjoint_theorem}
Let $\|\varphi_0 \|_{W^{1,\infty}(\Omega)}$ be sufficiently small.
Then, given an optimal control $\beta^*\in U_{\cal M}$ and the
corresponding states $u,\varphi$, there exists a solution
$(p,q)\in H^1(\Omega)\times H_0^1(\Omega)$ to the adjoint system
(\ref{adjoint1}).
Furthermore, $\beta^*$ can be explicitly characterized as:
\begin{equation}\label{characterization2}
\beta^*(x)=\min\Bigg(\max\Big(-\frac{(u-u_1)p}{2},0\Big),{\cal
M}\Bigg).
\end{equation}
\end{theorem}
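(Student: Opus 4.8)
The plan is to establish existence of the adjoint pair $(p,q)$ first and then derive the characterization \eqref{characterization2} from the first-order optimality condition. For existence, I would observe that the adjoint system \eqref{adjoint1} is a linear elliptic system in $(p,q)$ with bounded measurable coefficients: $\sigma(u)$ is strictly positive (since $0\le u\le N<u_*$ by Theorem~\ref{existreg}, so $\sigma(u)\ge\sigma(N)>0$), $\sigma'(u)$ is bounded by $\mu$, and $\nabla\varphi\in L^r(\Omega)$ with $r>d$ by Theorem~\ref{estimates}, so $|\nabla\varphi|^2\in L^{r/2}(\Omega)$ with $r/2>d/2$, which is the borderline Lebesgue exponent guaranteeing the lower-order terms are form-bounded. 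Just as in the sensitivity analysis (Theorem~\ref{sensitivity}), the coupling terms are controlled using $H^1(\Omega)\subset\subset L^s(\Omega)$ and $\|\nabla\varphi\|_r\le\Phi$, and the smallness of $\|\varphi_0\|_{W^{1,\infty}(\Omega)}$ (which bounds $\Phi$) makes the bilinear form coercive on $V_D\times H^1_0$ after absorbing the cross terms via Cauchy's inequality with $\e$ and the extended Poincar\'e inequality (Theorem 5.8 of \cite{Rod}). Lax--Milgram (or, since the system is not symmetric, the Lax--Milgram--Ne\v cas / Banach--Ne\v cas--Babu\v ska theorem) then yields a unique weak solution $(p,q)\in H^1(\Omega)\times H^1_0(\Omega)$.

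Next I would derive the optimality condition. Since $\beta^*$ minimizes $J$ over the convex set $U_{\cal M}$, for any admissible direction $\ell\in L^\infty(\Gamma_R)$ with $\beta^*+\e\ell\in U_{\cal M}$ for small $\e>0$ we have $\frac{d}{d\e}J(\beta^*+\e\ell)\big|_{\e=0^+}\ge 0$. Using the differentiability established in Theorem~\ref{sensitivity}, this derivative equals $\int_\Omega\psi_1\,dx+2\int_{\Gamma_R}\beta^*\ell\,ds$, where $(\psi_1,\psi_2)$ solve the sensitivity system \eqref{sensitivityeqn} with this particular $\ell$. The main work is to eliminate $\psi_1$ in favor of boundary data: I would test the weak form \eqref{sensitweakformul} of the sensitivity system with $v=p$ and $w=q$, test the weak form of the adjoint system \eqref{adjoint1} with $v=\psi_1$ and $w=\psi_2$, and subtract. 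The volume terms cancel in pairs by the bilinear structure (the $\sigma'(u)\psi_1\nabla\varphi$ terms, the $\sigma(u)\nabla\psi_2$ terms, and so on, are arranged precisely so the adjoint is the formal transpose of the sensitivity operator), leaving $\int_\Omega 1\cdot\psi_1\,dx = \int_{\Gamma_R}\ell(u-u_1)p\,ds$ after using the boundary conditions $p=0$ on $\Gamma_D$, $\psi_1=0$ on $\Gamma_D$, $q=\psi_2=0$ on $\partial\Omega$, and $\frac{\partial\psi_1}{\partial n}+\beta^*\psi_1=-\ell(u-u_1)$, $\frac{\partial p}{\partial n}+\beta^*p=0$ on $\Gamma_R$. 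Substituting, the variational inequality becomes $\int_{\Gamma_R}\big((u-u_1)p+2\beta^*\big)\ell\,ds\ge 0$ for all admissible $\ell$.

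Finally, the pointwise characterization follows by the standard bang-bang/projection argument on the convex set $[0,{\cal M}]$: since $\ell$ may be taken to be an arbitrary variation supported where $0<\beta^*<{\cal M}$ (both signs allowed), we get $(u-u_1)p+2\beta^*=0$, i.e. $\beta^*=-\frac{(u-u_1)p}{2}$ there; where $\beta^*=0$ only $\ell\ge 0$ is admissible, forcing $(u-u_1)p+2\beta^*=(u-u_1)p\ge 0$, i.e. $-\frac{(u-u_1)p}{2}\le 0$; and where $\beta^*={\cal M}$ only $\ell\le 0$ is admissible, forcing $(u-u_1)p+2{\cal M}\le 0$, i.e. $-\frac{(u-u_1)p}{2}\ge {\cal M}$. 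These three cases are exactly summarized by $\beta^*(x)=\min\big(\max(-\frac{(u-u_1)p}{2},0),{\cal M}\big)$, which is \eqref{characterization2}. I expect the main obstacle to be the existence step: verifying that the lower-order coefficient $|\nabla\varphi|^2$, which is only in $L^{r/2}$ with $r/2$ barely above $d/2$, together with the coupling terms, still yields a coercive (or at least invertible) form under the smallness hypothesis on $\varphi_0$ — this requires carefully tracking the same chain of H\"older, Sobolev embedding, and Poincar\'e estimates used in Theorem~\ref{sensitivity}, and checking that the constants combine so that smallness of $\|\varphi_0\|_{W^{1,\infty}(\Omega)}$ genuinely suffices. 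The cancellation bookkeeping in the duality step is routine but must be done with care to get the sign right.
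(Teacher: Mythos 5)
Your derivation of the characterization \eqref{characterization2} is essentially the paper's argument: compute the one-sided Gateaux derivative of $J$, use the duality/transpose relationship between the sensitivity system \eqref{sensitweakformul} and the adjoint system \eqref{adjoint1} to replace $\int_\Omega\psi_1\,dx$ by $\int_{\Gamma_R}\ell(u-u_1)p\,ds$, obtain the variational inequality $\int_{\Gamma_R}\ell\bigl(2\beta^*+(u-u_1)p\bigr)\,ds\ge 0$, and then argue pointwise on the three sets $\{0<\beta^*<{\cal M}\}$, $\{\beta^*=0\}$, $\{\beta^*={\cal M}\}$. (Incidentally, your sign bookkeeping in the $\beta^*=0$ case is cleaner than the paper's case (iii), which appears to carry an unfixed typo with $\lambda$ and a sign flip.)

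Where you genuinely diverge from the paper is the existence of $(p,q)$. The paper disposes of this in one sentence, invoking a Banach fixed-point argument ``in a similar way to \cite{Hry}''; you instead propose a direct coercivity argument via Lax--Milgram / Banach--Ne\v cas--Babu\v ska. Both are reasonable, and your variational route is arguably tighter here: after replacing $q$ by $-q$ (or, equivalently, testing the pair with $(p,-q)$ as BNB allows) the diagonal gives $\|\nabla p\|_2^2+\int_\Omega\sigma(u)|\nabla q|^2$ with $\sigma(u)\ge\sigma(N)>0$; the zeroth-order term $-\int_\Omega\sigma'(u)|\nabla\varphi|^2 p^2$ is actually \emph{nonnegative} because $\sigma'\le 0$, so it helps rather than hurts coercivity; and the cross terms $\int_\Omega(2\sigma(u)-\sigma'(u))\,p\,\nabla\varphi\cdot\nabla q$ are controlled by $3\mu\,\|p\|_s\|\nabla\varphi\|_r\|\nabla q\|_2\le 3\mu M_1\Phi\,\|p\|_{H^1}\|\nabla q\|_2$ with $\tfrac1s+\tfrac1r=\tfrac12$, exactly the choice in \eqref{r}, and are absorbed for $\Phi$ small. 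This delivers existence and uniqueness in one stroke. The fixed-point route the paper cites is a matter of economy (reusing \cite{Hry} machinery) rather than necessity. The one thing you should make explicit, which you flag as the ``main obstacle,'' is the sign change $q\mapsto -q$ (or the use of BNB rather than raw Lax--Milgram), since without it the naive form $B((p,q),(p,q))$ is not coercive. Once that is spelled out, the argument closes cleanly.
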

\begin{proof}
Observe that the existence of solution to the adjoint system
(\ref{adjoint1}) can be proved using Banach fixed point theorem in
a similar way to how it was done in \cite{Hry}. Now we consider
the derivation of the characterization of the optimal control. For
a variation $\ell\in L^{\infty}(\Gamma_R)$, with
$\beta^*+\varepsilon\ell\in U_{\cal M}$, the weak formulation of
the sensitivity system \eqref{sensitivityeqn} is given by
\begin{eqnarray}
&&-\int_\Omega\nabla\psi_1\cdot\nabla
v\,dx-\int_{\Gamma_R}\beta\psi_1
v\,ds+\int_\Omega\sigma^\prime(u)\psi_1|\nabla\varphi|^2v\,dx\nonumber\\
&+&2\int_\Omega \sigma(u)\nabla\varphi\cdot\nabla\psi_2 v\,dx
=\int_{\Gamma_R}\ell (u-u_1)
v\,ds\nonumber \quad\forall\,v\in V_D(\Omega),\nonumber\\
\label{weakf1}\\[-1.ex]
&&\int_\Omega(\sigma^\prime(u)\psi_1\nabla\varphi+\sigma(u)\nabla\psi_2)\cdot\nabla
w\,dx=0\quad \forall\,w\in H_0^1(\Omega).\nonumber
\end{eqnarray}
Since the minimum of $J$ is achieved at $\beta^*$ and for small
$\varepsilon>0$, $\beta^*+\varepsilon\ell\in U_{\cal M}$, we
obtain
\begin{eqnarray*}
0&\leq&\lim_{\varepsilon\rightarrow
0^+}\frac{J(\beta^*+\varepsilon\ell)-J(\beta^*)}{\varepsilon}\\
&=&\int_\Omega\psi_1\,dx+\int_{\Gamma_R}2\beta^*\ell\,ds
=\int_\Omega \left(\begin{array}{cc} \psi_1&\psi_2
\end{array}\right)
\left(\begin{array}{c}
1\\
0
\end{array}\right)
\,dx+\int_{\Gamma_R}2\beta^*\ell\,ds\\
&=&-\int_\Omega\nabla p\nabla \psi_1\,dx-\int_{\Gamma_R}\beta^*
p\,\psi_1\,ds+\int_\Omega\psi_1\,\sigma^{\prime}(u)|\nabla\varphi|^2p\,dx
-\int_\Omega\psi_1\,\sigma^\prime(u)\nabla\varphi\nabla
q\,dx\\
&+&2\int_\Omega p\,\sigma(u)\nabla\varphi\cdot\nabla
\psi_2\,dx-\int_\Omega\sigma(u)\nabla q\cdot\nabla\psi_2\,dx
+\int_{\Gamma_R}2\beta^*\ell\,ds\\
&=&\Big\{-\int_\Omega\nabla p\nabla
\psi_1\,dx-\int_{\Gamma_R}\beta^*
p\,\psi_1\,ds+\int_\Omega\psi_1\,\sigma^{\prime}(u)|\nabla\varphi|^2p\,dx\\
&+&2\int_\Omega p\,\sigma(u)\nabla\varphi\nabla\psi_2\,dx \Big\}
+\Big[-\int_\Omega\psi_1\,\sigma^\prime(u)\nabla\varphi\cdot\nabla
q\,dx-\int_\Omega\sigma(u)\nabla
q\cdot\nabla\psi_2\,dx\Big]\\
&+&\int_{\Gamma_R}2\beta^*\ell\,ds=\int_{\Gamma_R}\ell (u-u_1)
p\,ds+\int_{\Gamma_R}2\beta^*\ell\,ds,
\end{eqnarray*}
where we integrated by parts and used (\ref{weakf1}) with test
functions $p$ and $q$. Hence,
\begin{equation}\label{variation}
\int_{\Gamma_R}\ell(2\beta^*+(u-u_1)p)\,ds\geq 0.
\end{equation}
By the same argument as in \cite{Hry} one can obtain the explicit
characterization (\ref{characterization2}) of the optimal control.
Namely,
\begin{enumerate}[(i)]
\item Taking the variation $\ell$ to have support on the set
$\{x\in\Gamma_R:\lambda<\beta^*(x)<{\cal M}\}$ implies that the
variation $\ell(x)$ can be of any sign, and therefore we obtain
$2\beta^*+(u-u_1)p=0$ which leads to
$\beta^*=-\frac{(u-u_1)p}{2}\,$.

\item On the set $\{x\in\Gamma_R:\beta^*(x)={\cal M}\}$, the
variation must satisfy $\ell(x)\leq 0$ and therefore we get
$2\beta^*+(u-u_1)p\leq 0$ implying ${\cal
M}=\beta^*(x)\leq-\frac{(u-u_1)p}{2}\,$.

\item On the set $\{x\in\Gamma_R:\beta^*(x)=\lambda\}$, the
variation must satisfy $\ell(x)\geq 0$. This implies
$2\beta^*+(u-u_1)p\leq 0$ and thus
$\lambda=\beta^*(x)\geq-\frac{(u-u_1)p}{2}\,$.
\end{enumerate}

 Combining cases
(i), (ii), and (iii) gives the explicit characterization
(\ref{characterization2}) of the optimal control $\beta$.
\qquad
\end{proof}

Substituting (\ref{characterization2}) into the state system
(\ref{statet}) and the adjoint equations (\ref{adjoint1}) we
obtain the optimality system:
\begin{eqnarray}
\Delta u^*+{\sigma}(u^*)|\nabla\varphi^*|^{2}&=&0\ {\rm in}\ \Omega,\nonumber\\
\nabla\cdot(\sigma(u^*)\nabla\varphi^*)&=&0\ {\rm in}\ \Omega,\nonumber\\
\Delta
p+\sigma^{\prime}(u^*)|\nabla\varphi^*|^2p-\sigma^\prime(u^*)
\nabla\varphi^*\cdot\nabla q&=&1\ {\rm in}\ \Omega,\nonumber\\
\nabla\cdot[-2p\,\sigma(u^*)\nabla\varphi^*+\sigma(u^*)\nabla
q]&=&0\ {\rm in}\ \Omega,\nonumber\\
\label{OS}\\[-3.5ex]
\frac{\partial p}{\partial n}+\min(\max(-{(u^*-u_1)p}/{2},0),{\cal
M})p&=&0\ {\rm on}\
\ \Gamma_R,\nonumber\\
p&=&0\ {\rm on}\ \Gamma_D,\nonumber\\
q&=&0\ {\rm on}\ \partial\Omega,\nonumber\\
{\displaystyle{\frac{\partial u^*}{\partial
n}}}+\min(\max(-{(u^*-u_1)p}/{2},0),{\cal M})(u^*-u_1)&=&0\ {\rm
on}\ \Gamma_R,\nonumber\\
u&=&u_0\ {\rm on}\ \Gamma_D,\nonumber\\
\varphi^*&=&\varphi_0\ {\rm on}\ \partial\Omega.\nonumber
\end{eqnarray}
Note that existence of solution to the optimality system
(\ref{OS}) follows from the existence of solution to the state
system (\ref{statet}) and Theorem \ref{adjoint_theorem}. 

\end{document}